\definecolor{red}{rgb}{0.6,0,0}
\newcommand{\stacksproj}[1]{{\cite[Tag~{#1}]{stacks-project}}}
\newcommand{\Fp}{\mathbb{\overline{F}}_p}
\newcommand{\mbQ}{\mathbb{Q}}
\newcommand{\mbR}{\mathbb{R}}
\newcommand{\mbZ}{\mathbb{Z}}
\newcommand{\mcA}{\mathcal{A}}
\newcommand{\mcF}{\mathcal{F}}
\newcommand{\mcG}{\mathcal{G}}
\newcommand{\mcI}{\mathcal{I}}
\newcommand{\mcO}{\mathcal{O}}
\DeclareMathOperator{\Supp}{Supp}
\DeclareMathOperator{\Spec}{Spec}
\DeclareMathOperator{\codim}{codim}
\DeclareMathOperator{\Pic}{Pic}
\DeclareMathOperator{\Exc}{Exc}
\newcommand*{\coloneq}{\mathrel{\mathop:}=}
\newcommand*{\defeq}{\mathrel{\mathop:}=}
\theoremstyle{plain}
\newtheorem{theorem}{Theorem}[section]
\newtheorem{proposition}[theorem]{Proposition}
\newtheorem{lemma}[theorem]{Lemma}
\newtheorem{corollary}[theorem]{Corollary}
\theoremstyle{definition}
\newtheorem{definition}[theorem]{Definition}
\theoremstyle{remark}
\newtheorem{remark}[theorem]{Remark}
\title[On the relative MMP in low characteristics]{On the relative Minimal Model Program for threefolds in low characteristics}
\author{Christopher Hacon}
\address{Department of Mathematics \\
University of Utah\\
Salt Lake City, UT 84112, USA}
\email{hacon@math.utah.edu}\email{}
\author{Jakub Witaszek}
\address{Department of Mathematics, University of Michigan, Ann Arbor, MI 48109, USA} 
\email{jakubw@umich.edu}
\begin{document}

\begin{abstract}
We show the validity of the relative dlt MMP over $\mbQ$-factorial threefolds in all characteristics $p>0$. As a corollary, we generalise many recent results to low characteristics including: $W\mcO$-rationality of klt singularities, inversion of adjunction,  and normality of divisorial centres up to a universal homeomorphism.
\end{abstract}

\subjclass[2010]{14E30, 14J17, 13A35}
\keywords{Minimal Model Program, Kawamata log terminal singularities, positive characteristic}

\maketitle

\section{Introduction}

One of the fundamental tools used in the study of algebraic varieties is the \emph{Minimal Model Program} (MMP), which is a higher-dimensional analogue of the classification of surfaces. This program has been a topic of intense research in the last decades and a major part of it is known to hold for projective varieties defined over a field of characteristic zero (see \cite{bchm06}).

Moreover, in recent years, there has been substantial progress in the MMP in positive characteristic. In particular, it has been shown that the program is valid for surfaces over excellent base schemes (see \cite{tanaka12,tanaka16_excellent}) and for three-dimensional varieties defined over perfect fields of characteristic $p>5$ {(see \cite{hx13,ctx13,birkar13,BW14,GNT06})}. This has led to many striking applications, for example, Gongyo, Nakamura, and Tanaka used the MMP to show the existence of rational points on log Fano threefolds defined over finite fields $\mathbb{F}_{p^n}$ for $p>5$.

The goal of this article is to extend some of the foundational results on the MMP for threefolds, and applications thereof, to low characteristics. Our main result is the following.

\begin{theorem} \label{thm:main} Let $(Y,\Delta)$ be a three-dimensional $\mbQ$-factorial dlt pair defined over a perfect field of characteristic $p>0$. Assume that there exists a projective {birational} morphism $\pi \colon Y \to X$ over a normal $\mbQ$-factorial variety $X$ such that $\Exc(\pi) \subseteq \lfloor \Delta \rfloor$. Then $\pi$-relative contractions and flips exist for $K_Y+\Delta$, and we can run a $(K_Y+\Delta)$-MMP over $X$ which terminates with a minimal model.
\end{theorem}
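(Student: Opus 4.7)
The plan is to split the theorem into two separable tasks: existence of each individual $(K_Y+\Delta)$-negative extremal contraction or flip over $X$, and termination of any such sequence. The key structural feature to exploit is that $\pi$ is birational with $\Exc(\pi)\subseteq\lfloor\Delta\rfloor$. This forces every $(K_Y+\Delta)$-negative extremal ray of $\NE(Y/X)$ to be spanned by curves contained in the two-dimensional locus $\Exc(\pi)\subseteq\lfloor\Delta\rfloor$, and each subsequent birational modification produced by the relative MMP preserves this hypothesis. Consequently, the entire program takes place within a two-dimensional subset of $Y$, which is what allows us to replace the 3-fold-specific tools available only for $p>5$ by the surface MMP, valid in every characteristic after Tanaka.

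For the existence of divisorial and flipping contractions, I would fix an extremal ray $R$ of $\NE(Y/X)$ with $(K_Y+\Delta)\cdot R<0$ and select a component $S$ of $\lfloor\Delta\rfloor$ containing a curve generating $R$. Dlt adjunction endows $(S^\nu,\Delta_S)$ with a semi-dlt surface pair structure, and the surface MMP together with Keel-type semiampleness results produces the required contraction on $S$. I would then lift it to a neighbourhood of $\Exc(\pi)$ via a relative base-point-free statement, using that $\pi$ is already an isomorphism outside $\Exc(\pi)$ and that curves not contracted on $S$ are $R$-positive.

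The main obstacle is the construction of flips. Via Shokurov's pl-flip reduction, the flip of a flipping contraction $f\colon Y\to Z$ over $X$ is realised as $\mathrm{Proj}_Z$ of the restricted algebra on a component $S\subseteq\lfloor\Delta\rfloor$ containing the flipped curves. By dlt adjunction this algebra corresponds to a log canonical ring on the surface pair $(S^\nu,\Delta_S)$, whose finite generation follows from surface MMP in arbitrary characteristic. The subtle point is that the standard lifting of sections from $S$ to $Y$ passes through sharp $F$-regularity or Kawamata--Viehweg vanishing in dimension three, and these are unavailable for $p\leq 5$. The hypothesis $\Exc(f)\subseteq\lfloor\Delta\rfloor$ is what lets us sidestep this: the sections needed to construct the flip vanish on no curve outside $S$, so the lifting problem reduces to a statement on the 2-dimensional exceptional locus, where only surface-level positivity results are required.

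Termination follows by combining the bound on divisorial contractions, each of which strictly decreases $\rho(Y/X)$, with special termination of the remaining sequence of flips. In the present relative birational regime, special termination reduces to termination of the induced sequence of log surface modifications on the two-dimensional locus $\lfloor\Delta\rfloor^\nu$, which is standard from the surface MMP in all characteristics. Putting these three steps together yields the $(K_Y+\Delta)$-minimal model over $X$.
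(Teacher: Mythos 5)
Your overall architecture (rays live in $\Exc(\pi)\subseteq\lfloor\Delta\rfloor$, contractions via surface MMP plus Keel, flips via the pl-flip/restricted-algebra reduction, termination via special termination) matches the paper's skeleton, but there is a genuine gap at the heart of the flip construction. You correctly identify that the obstruction for $p\leq 5$ is lifting sections from the divisor $S$ to $Y$, but your proposed fix --- that ``the lifting problem reduces to a statement on the 2-dimensional exceptional locus'' because the relevant sections vanish on no curve outside $S$ --- is not an argument. Lifting sections of $L|_S$ to sections of $L$ is controlled by the vanishing of $H^1(Y,L-S)$ (or, in the Frobenius-stable setting of Hacon--Xu, by the surjectivity of $S^0(Y,\sigma(Y,\psi)\otimes L)\to S^0(S',\sigma(S',\psi_{S'})\otimes L|_{S'})$); this obstruction lives in the cohomology of the threefold and cannot be localised to $S$. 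Finite generation of the restricted algebra on the surface, which you correctly note follows from the surface MMP, is useless without this lifting statement. The paper's actual mechanism is different and is the real content of the result: using $\mbQ$-factoriality of $X$ one produces \emph{two} exceptional divisors $E,E'\subseteq\lfloor\Delta\rfloor$ with $R\cdot E<0$ and $R\cdot E'>0$ (Lemma \ref{lemma:extremal_rays}); the positivity of $R\cdot E'$ makes the curve $C=E'|_{\tilde E}$ relatively nef on the normalised surface $\tilde E$, which by Lemma \ref{lemma:rel_surfaces} forces the surface pair to be relatively purely F-regular in \emph{every} characteristic; inversion of F-adjunction then makes $(X,\Delta^{\mathrm{plt}})$ relatively purely F-regular over $Z$, and the Hacon--Xu $S^0$-lifting machinery applies verbatim. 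You never use the existence of the second divisor $E'$, which is precisely what replaces the hypothesis $p>5$.

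A secondary issue: your contraction step quietly assumes that semiampleness descends from $S^\nu$ to $S$ and then to $Y$, but Keel's descent requires $S^\nu\to S$ to be a universal homeomorphism. This is Theorem \ref{thm:normality}, which is not known a priori and in the paper is itself proved by running the MMP of Proposition \ref{prop:main} starting from a log resolution (where divisorial centres are smooth) and propagating the property through each contraction and flip via Propositions \ref{prop:GNT} and \ref{prop:ascend_witt_rationality}. Without this bootstrapping your contraction step is circular for a general dlt pair, since divisorial centres of three-dimensional plt pairs can fail to be normal in characteristic $2$.
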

Theorem \ref{thm:main} allows us to study  three-dimensional singularities by means of the MMP. In particular, it can be used to construct Koll\'ar components. 

The proof of the existence of flips for threefolds in positive characteristic $p>5$ relies on the fact that pl-flipping contractions are purely relatively F-regular. Although this is not true in general when $p \leq 5$, we show that pl-flipping contractions occurring in the relative dlt-MMP over $\mbQ$-factorial threefolds, as in the theorem above, do satisfy this property. Therefore, we can make use of the same strategy as in \cite{hx13} to conclude the construction of such pl-flips.

In order to show the existence of pl-contractions, it was proven in \cite[Proposition 4.1]{hx13} that divisorial centres of three-dimensional plt pairs are normal in characteristic $p>5$, so the main theorem of \cite{keel99} could be invoked. However, this result about divisorial centres is false in characteristic { $p=2$, in general, as shown in \cite{CT06PLT} (cf.\ \cite{bernasconi18})}. Nevertheless, Gongyo, Nakamura, and Tanaka observed that in order to use \cite{keel99}, it is enough to show that divisorial centres are normal up to a universal homeomorphism; moreover, they proved that this property of divisorial centres is stable under the MMP. Therefore, they were able to show that the existence of pl-flips in low characteristic implies the existence of minimal models (cf.\ \cite[Theorem 3.15]{GNT06}). The same strategy concludes the proof of Theorem \ref{thm:main} and implies the validity of the following results (cf.\ \cite[Theorem 3.14]{GNT06}).

\begin{theorem}[Normality of divisorial centres up to a universal homeomorphism] \label{thm:normality} Let $(X,S+B)$ be a $\mbQ$-factorial three-dimensional plt pair defined over a perfect field of characteristic $p>0$, where $S$ is an irreducible divisor. Then the normalisation $f \colon \tilde S \to S$ is a universal homeomorphism.
\end{theorem}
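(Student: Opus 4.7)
The plan is to lift normality from a log resolution down to $X$ via the relative dlt MMP of Theorem~\ref{thm:main}, together with the Gongyo--Nakamura--Tanaka observation that ``normality of divisorial centres up to universal homeomorphism'' is preserved under each step of the MMP.

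First, using resolution of singularities for threefolds over perfect fields, I would choose a log resolution $\pi \colon Y \to X$ of $(X, S+B)$ with exceptional divisors $\{E_i\}$, and set $\Gamma \coloneq S_Y + \pi_*^{-1}B + \sum_i E_i$, where $S_Y$ denotes the strict transform of $S$. Then $(Y, \Gamma)$ is log smooth, hence $\mbQ$-factorial dlt; the component $S_Y$ is smooth, and $\Exc(\pi) \subseteq \lfloor \Gamma \rfloor$. Since $(X, S+B)$ is plt, each exceptional discrepancy satisfies $a(E_i; X, S+B) > -1$, and a standard computation gives
\[
K_Y + \Gamma \;=\; \pi^*(K_X + S + B) + \sum_i \bigl(1 + a(E_i; X, S+B)\bigr)\, E_i,
\]
where every coefficient on the right is strictly positive. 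Thus $K_Y + \Gamma$ is $\mbQ$-linearly equivalent over $X$ to an effective $\pi$-exceptional divisor whose support equals all of $\Exc(\pi)$.

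Theorem~\ref{thm:main} now applies to $\pi \colon (Y, \Gamma) \to X$, so the $(K_Y + \Gamma)$-MMP over $X$ runs and terminates with a $\pi$-minimal model $\pi' \colon Y' \to X$. A standard negativity-lemma argument (applied to the surviving strict transforms of the $E_i$ on $Y'$, which sum to a $\pi'$-nef, $\pi'$-exceptional, effective divisor) forces every $E_i$ to be contracted along the MMP, so $Y' \to X$ is a small birational morphism between $\mbQ$-factorial threefolds, hence an isomorphism; we identify $Y' = X$. Since $S_Y$ is not $\pi$-exceptional, its strict transform $S_k \subseteq Y_k$ survives every divisorial contraction and every flip, and is ultimately identified with $S \subseteq X$.

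Finally, I would invoke the stability observation of Gongyo--Nakamura--Tanaka (cf.\ \cite[Theorem~3.14]{GNT06}): for each divisorial contraction or pl-flip occurring in a three-dimensional relative dlt MMP, if the normalisation of the strict transform of $S$ in the source is a universal homeomorphism, then so is the analogous normalisation in the target. At the initial step $S_Y$ is smooth, so its normalisation is an isomorphism and trivially a universal homeomorphism; iterating the stability along the finitely many MMP steps yields that $\tilde S \to S$ is a universal homeomorphism. The main technical hurdle is the stability across a pl-flip, where the scheme-structure of the strict transform of $S$ genuinely changes, but Theorem~\ref{thm:main} guarantees that every step of our MMP is a divisorial contraction or pl-flip within the GNT framework, so their argument applies directly.
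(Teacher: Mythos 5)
Your overall strategy is exactly the paper's: pass to a log resolution, run the relative dlt MMP over $X$, use the plt discrepancy computation plus the negativity lemma to see that the minimal model is $X$ itself, and carry the normality of divisorial centres (trivial on the log smooth model) down through each step via the Gongyo--Nakamura--Tanaka stability results. The discrepancy formula, the argument that all $E_i$ are contracted, and the identification of the minimal model with $X$ via $\mbQ$-factoriality are all correct and match the paper.

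There is, however, one genuine logical flaw: you invoke Theorem~\ref{thm:main} to run the $(K_Y+\Gamma)$-MMP over $X$, but in the paper Theorem~\ref{thm:main} is itself deduced \emph{from} Theorem~\ref{thm:normality} (its proof begins ``By Theorem~\ref{thm:normality} we have that the divisorial centres of $(Y,\Delta)$ are normal up to a universal homeomorphism\dots''), so your argument as written is circular. The correct tool at this stage is Proposition~\ref{prop:main}, the version of the relative MMP that takes ``normal divisorial centres up to a universal homeomorphism'' as a \emph{hypothesis} on the input pair. That hypothesis is satisfied here precisely because $(Y,\Gamma)$ is log smooth, so its divisorial centres are smooth --- this is the point of starting from a log resolution. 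Proposition~\ref{prop:main} also already contains, as part of its conclusion, the statement that the output of the MMP has normal divisorial centres up to a universal homeomorphism (this is where Propositions~\ref{prop:GNT}, \ref{prop:flips_exist}, and \ref{prop:ascend_witt_rationality} do the work), so your final paragraph tracking normality across divisorial contractions and pl-flips is subsumed by that citation rather than needing a separate appeal to \cite[Theorem~3.14]{GNT06}. With Theorem~\ref{thm:main} replaced by Proposition~\ref{prop:main}, your proof coincides with the paper's.
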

\noindent In fact, we show that all log canonical centres of $\mbQ$-factorial three-dimensional dlt pairs are normal up to a universal homeomorphism (see Remark \ref{rem:smaller_centres_normal}).

Furthermore, we obtain the following applications of Theorem \ref{thm:main}.
\begin{corollary}[{$W\mcO$-rationality, cf.\ \cite[Theorem 3.16]{GNT06}}] \label{cor:wittrationality} Let $X$ be a klt three-dimensional $\mbQ$-factorial  variety defined over a perfect field of characteristic $p>0$. Then $X$ has $W\mcO$-rational singularities.
\end{corollary}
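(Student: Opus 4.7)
The plan is to follow the strategy of Gongyo--Nakamura--Tanaka in \cite[Theorem 3.16]{GNT06}, where the analogous statement was proven for characteristic $p > 5$. An inspection of their argument shows that the characteristic restriction enters only through the use of the relative three-dimensional MMP; since Theorem \ref{thm:main} now supplies that MMP in all characteristics, their proof should go through essentially verbatim.

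Concretely, the first step is to use Theorem \ref{thm:main} to build a $\mbQ$-factorial dlt modification $\pi \colon Y \to X$. Starting from any log resolution $f \colon Y_0 \to X$ with reduced exceptional divisor $E_0$, the snc pair $(Y_0, E_0)$ is dlt with $\Exc(f) \subseteq \lfloor E_0 \rfloor$, so one may run the relative $(K_{Y_0} + E_0)$-MMP over $X$ provided by Theorem \ref{thm:main}. The output is a $\mbQ$-factorial dlt pair $(Y, E)$ together with a projective birational morphism $\pi \colon Y \to X$ such that $\Exc(\pi) \subseteq \lfloor E \rfloor$ and $K_Y + E$ is $\pi$-nef. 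Combining $\pi$-nefness with the fact that $X$ is klt, the negativity lemma yields $K_Y + E = \pi^* K_X + F$ with $F$ an effective $\pi$-exceptional $\mbQ$-divisor supported on $\lfloor E \rfloor$.

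The second step is to transfer $W\mcO$-rationality from a suitable mild model down to $X$ via $\pi$. Because $Y$ has only dlt singularities in dimension three, its $W\mcO$-rationality can be checked through a general hyperplane section argument, using that klt (in particular canonical) surface singularities are $W\mcO$-rational by Chatzistamatiou--Liedtke in every characteristic. The formula $K_Y + E - \pi^* K_X = F \geq 0$ then drives the standard Witt-vector pushforward/descent argument to show that the natural map $W\mcO_X \to R\pi_* W\mcO_Y$ is a quasi-isomorphism, which is exactly $W\mcO$-rationality of $X$.

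The main obstacle is this descent step: verifying that $R^i\pi_* W\mcO_Y = 0$ for $i > 0$ and that the natural map $W\mcO_X \to \pi_* W\mcO_Y$ is an isomorphism. Once Theorem \ref{thm:main} is in hand, however, the argument becomes purely formal and characteristic-independent, being essentially identical to what is carried out in \cite[Section 3]{GNT06}. No characteristic-specific input is required beyond Theorem \ref{thm:main} itself.
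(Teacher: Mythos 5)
Your high-level instinct (reduce to Theorem \ref{thm:main} and follow \cite{GNT06}) is right, but the two steps that actually carry the proof have genuine gaps. First, note that since $X$ is klt and $\mbQ$-factorial, the $(K_{Y_0}+E_0)$-MMP over $X$ starting from a log resolution contracts \emph{all} exceptional divisors and its output is $X$ itself, not a nontrivial dlt model $Y$; so your plan to "check $W\mcO$-rationality of $Y$ and then descend to $X$" collapses into checking $W\mcO$-rationality of $X$ directly. Worse, the tool you propose for that check does not exist: there is no hyperplane-section argument deducing $W\mcO$-rationality of a dlt (or klt) threefold singularity from $W\mcO$-rationality of klt surface singularities; establishing $W\mcO$-rationality of threefold klt singularities is precisely the content of the corollary, so this step is circular. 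Second, the claimed "standard Witt-vector pushforward/descent argument" driven by $K_Y+E-\pi^*K_X=F\geq 0$ is a characteristic-zero reflex: in characteristic zero one would invoke Kawamata--Viehweg vanishing to get $R^i\pi_*\mcO_Y=0$ from such a discrepancy inequality, but no such vanishing theorem is available in positive characteristic (for $\mcO$ or for $W\mcO_{\mbQ}$), and nothing about this step is "purely formal and characteristic-independent."

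The paper's actual proof is one line but rests on a different mechanism: take a log resolution $\pi\colon Y\to X$, run the $(K_Y+\Exc(\pi))$-MMP over $X$ via Proposition \ref{prop:main}, observe that the minimal model must be $X$ itself (all exceptional divisors are contracted because $X$ is klt, and the resulting small map to the $\mbQ$-factorial $X$ is an isomorphism), and then use the "moreover" clause of Proposition \ref{prop:main}: $W\mcO$-rationality is transported from the smooth $Y$ down to $X$ \emph{step by step through the MMP}, via Proposition \ref{prop:GNT} for the contractions and Proposition \ref{prop:ascend_witt_rationality} for the flipped sides. The vanishing $R^1g_*(W\mcO_{\mbQ})=0$ needed at each step is extracted from relative F-regularity of the pl-contractions (giving $R^1g_*\mcO=0$), not from any discrepancy computation. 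If you want to salvage your write-up, replace your steps two and three with this MMP-by-MMP bookkeeping and cite the relevant propositions of \cite{GNT06}.
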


\begin{corollary}[dlt modification] \label{cor:dlt_mod} Let $(X,\Delta)$ be a three-dimensional $\mbQ$-factorial  log pair defined over a perfect field of characteristic $p>0$. Then a dlt modification of $(X,\Delta)$ exists, that is, a birational morphism $\pi \colon Y \to X$ such that $(Y, \pi^{-1}_*\Delta + \Exc(\pi))$ is dlt, $\mbQ$-factorial, and minimal over $X$.
\end{corollary}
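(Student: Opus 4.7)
The plan is to reduce Corollary \ref{cor:dlt_mod} to Theorem \ref{thm:main} by starting from a log resolution and running the resulting relative dlt MMP over $X$. First, I would invoke resolution of singularities for threefolds over a perfect field of characteristic $p>0$ (Cossart--Piltant) to obtain a log resolution $f \colon W \to X$ of $(X,\Delta)$. Since $W$ is smooth, it is $\mbQ$-factorial. Setting $\Gamma \defeq f^{-1}_*\Delta + \Exc(f)$ with every $f$-exceptional prime divisor appearing with coefficient one, the pair $(W,\Gamma)$ is log smooth, hence dlt and $\mbQ$-factorial, and by construction $\Exc(f) \subseteq \lfloor \Gamma \rfloor$.

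Next, I would apply Theorem \ref{thm:main} to $(W,\Gamma)$ with respect to the morphism $f$: it produces a $(K_W+\Gamma)$-MMP over $X$ which terminates with a minimal model $\pi \colon Y \to X$. A key observation is that the hypothesis $\Exc(W_i \to X) \subseteq \lfloor \Gamma_i \rfloor$ is preserved throughout the MMP: divisorial contractions can only remove components of $\lfloor \Gamma_i \rfloor$, and flips preserve the divisorial part of the relative exceptional locus, so the assumption required by Theorem \ref{thm:main} remains valid at every intermediate stage. Dlt-ness and $\mbQ$-factoriality are preserved along any step of a dlt MMP, and on the final model $K_Y + \Gamma_Y$ is $\pi$-nef by definition of a minimal model.

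It remains to identify the output with a dlt modification in the sense of the statement. The $f$-exceptional prime divisors that survive the MMP are precisely the $\pi$-exceptional ones (the others having been contracted along the way), and the non-exceptional part of $\Gamma_Y$ is $\pi^{-1}_*\Delta$. Therefore $\Gamma_Y = \pi^{-1}_*\Delta + \Exc(\pi)$, which is exactly the boundary required for a dlt modification.

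The real difficulty here lies entirely in Theorem \ref{thm:main} itself; granting that result, the reduction above is the standard one already used in characteristic zero and for $p>5$. A secondary point worth being careful about is the availability of log resolutions of arbitrary threefold log pairs over perfect fields in low characteristic, which is the role played by Cossart--Piltant; everything else is formal.
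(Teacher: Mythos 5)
Your argument is correct and is essentially identical to the paper's own (one-line) proof: take a log resolution $\pi\colon Y\to X$ and observe that a minimal model of $(Y,\pi^{-1}_*\Delta+\Exc(\pi))$ over $X$, which exists by Theorem \ref{thm:main}, is a dlt modification. The extra details you supply (preservation of the hypothesis along the MMP, identification of the boundary on the output) are already packaged inside Theorem \ref{thm:main} and Proposition \ref{prop:main}, so nothing further is needed.
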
 

\begin{corollary}[{Inversion of adjunction, cf.\ \cite[Theorem A]{Das15}}] \label{cor:plt} Consider a $\mbQ$-factorial three-dimensional log pair $(X,S+B)$ defined over a perfect field of characteristic $p>0$, where $S$ is an irreducible divisor. Then $(X,S+B)$ is plt on a neighborhood of $S$ if and only if $(\tilde S,B_{\tilde S})$ is klt, where $\tilde S$ is the normalisation of $S$ and $B_{\tilde S}$ is the different.
\end{corollary}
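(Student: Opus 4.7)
I would split the proof into the two implications. For the ``only if'' direction, standard adjunction suffices: by Theorem~\ref{thm:normality} the normalisation $f \colon \tilde S \to S$ is a universal homeomorphism, the different is defined by $K_{\tilde S} + B_{\tilde S} = f^{*}((K_X + S + B)|_{S})$, and divisors over $\tilde S$ correspond via $f$ to divisors over $X$ centred in $S$ with matching discrepancy; plt then translates directly into klt on $(\tilde S, B_{\tilde S})$.

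For the ``if'' direction, assume $(\tilde S, B_{\tilde S})$ is klt and, for contradiction, that $(X, S+B)$ is not plt near $S$. A component of $B$ of coefficient $\geq 1$ passing through $S$ would force a coefficient $\geq 1$ in $B_{\tilde S}$, contradicting klt; so coefficients of $B$ near $S$ are all $< 1$, and failure of plt produces an exceptional divisor $E$ over $X$ with $\mathrm{centre}(E) \subsetneq S$ and $a(E, X, S+B) \leq -1$. I then apply Corollary~\ref{cor:dlt_mod} to obtain a $\mathbb{Q}$-factorial dlt modification $\pi \colon (Y, \Delta_Y) \to (X, S+B)$ with $\Delta_Y = \pi^{-1}_{*}(S+B) + \Exc(\pi)$, arranged to extract $E$ (start from a log resolution containing $E$ and run a relative MMP via Theorem~\ref{thm:main}). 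The $\pi$-nefness of $K_Y + \Delta_Y$ together with the negativity lemma (available from Theorem~\ref{thm:main}) give
\[
K_Y + \Delta_Y \;=\; \pi^{*}(K_X + S + B) + F, \qquad F = \sum_{i}(1 + a_i)E_i \;\leq\; 0,
\]
with $a_i = a(E_i, X, S+B) \leq -1$. Let $T := \pi^{-1}_{*} S$; connectedness of the fibres of $\pi$ forces $E \cap T \neq \emptyset$, so $E|_T$ is a nonzero effective divisor.

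The contradiction comes from adjunction on $T$. By Remark~\ref{rem:smaller_centres_normal} the normalisation $T^\nu \to T$ is a universal homeomorphism, and dlt adjunction yields a dlt boundary $\Delta_{T^\nu}$ on $T^\nu$ with $K_{T^\nu} + \Delta_{T^\nu} = (K_Y + \Delta_Y)|_{T^\nu}$ and $E|_{T^\nu} \subseteq \lfloor \Delta_{T^\nu} \rfloor$. Since $\pi|_{T^\nu} \colon T^\nu \to \tilde S$ is birational and $(K_X + S + B)|_{\tilde S} = K_{\tilde S} + B_{\tilde S}$,
\[
K_{T^\nu} + (\Delta_{T^\nu} - F|_{T^\nu}) \;=\; (\pi|_{T^\nu})^{*}(K_{\tilde S} + B_{\tilde S}),
\]
so $(T^\nu, \Delta_{T^\nu} - F|_{T^\nu})$ is log-crepant to $(\tilde S, B_{\tilde S})$. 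Any component $G$ of $E|_{T^\nu}$ has coefficient exactly $1$ in $\Delta_{T^\nu}$ and non-negative coefficient in $-F|_{T^\nu}$, hence coefficient $\geq 1$ in $\Delta_{T^\nu} - F|_{T^\nu}$; the crepant relation then forces $a(G, \tilde S, B_{\tilde S}) \leq -1$, contradicting klt.

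The principal obstacle is ensuring that the discrepancy and adjunction calculus used above remains valid in low characteristic. In particular, $T$ need not be normal, so one has to route through $T^\nu$ via the universal homeomorphism from Remark~\ref{rem:smaller_centres_normal}, and both the negativity lemma and dlt adjunction on $T^\nu$ must be available in this generality --- precisely the machinery supplied by Theorem~\ref{thm:main}, Corollary~\ref{cor:dlt_mod}, and Theorem~\ref{thm:normality}.
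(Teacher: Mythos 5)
Your overall route coincides with the paper's: the easy direction via adjunction on a log resolution, and the converse via a dlt modification, adjunction to the strict transform of $S$, and the crepant comparison $K_{T^\nu}+(\Delta_{T^\nu}-F|_{T^\nu})=(\pi|_{T^\nu})^*(K_{\tilde S}+B_{\tilde S})$, whose coefficient computation is correct. However, two steps in your setup of the ``if'' direction have genuine gaps. First, ``arranged to extract $E$'': Corollary \ref{cor:dlt_mod} produces \emph{a} dlt modification, but the relative MMP run from a log resolution containing $E$ may well contract $E$, and nothing in Theorem \ref{thm:main} lets you prescribe which exceptional divisors survive; so you cannot assume your chosen $E$ appears on $Y$. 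Second, and more seriously, ``connectedness of the fibres of $\pi$ forces $E\cap T\neq\emptyset$'' is false as stated: for $P\in\pi(E)$ the fibre $\pi^{-1}(P)$ is connected and meets both $E$ and $T$, but it may be a chain of exceptional components joining $E$ to $T$ without $E$ and $T$ themselves intersecting. The usual repair is Koll\'ar--Shokurov connectedness of the non-klt locus, which rests on Kawamata--Viehweg vanishing and is not available in characteristic $p$ (nor established anywhere in this paper).

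Both gaps disappear if you restructure the argument as the paper does, and your key computation already does all the work. By the negativity lemma \emph{every} $\pi$-exceptional divisor $E_i$ of the dlt modification satisfies $a(E_i,X,S+B)\le -1$, i.e.\ has coefficient $\ge 1$ in the crepant pullback boundary $\Delta_Y-F$; hence your adjunction argument on $T^\nu$ applies verbatim to \emph{any} exceptional divisor meeting $T$ and, given that $(\tilde S,B_{\tilde S})$ is klt, shows that no such divisor exists. Since $\Exc(\pi)$ is divisorial and $T\to S$ is surjective, this forces $\pi$ to be an isomorphism over a neighbourhood of $S$, so $(X,S+B)$ is dlt there; combined with your observation that no component of $\lfloor B\rfloor$ meets $S$, irreducibility of $S$ gives plt. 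In short: do not chase a particular bad divisor $E$ --- show instead that the dlt modification cannot touch $S$ at all.
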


Moreover, using Theorem \ref{thm:main}, we may generalise some other results previously known only when $p>5$.
\begin{itemize}
	\item divisorial contractions of extremal rays exist in the category of $\mbQ$-factorial three-dimensional varieties for $p>0$,
	\item the tame fundamental group of three-dimensional $\mbQ$-factorial klt singularities is finite when $p>0$, and
	\item minimal log canonical centres of $\mbQ$-factorial lc pairs $(X,\Delta)$, with $X$ being klt, are normal up to a universal homeomorphism for $p>0$.
\end{itemize}

Lastly, we show the validity of the $(K_X+\Delta)$-MMP in the setting of dlt reductions. In particular, this allows for the construction of canonical skeletons of degenerations of varieties with non-negative Kodaira dimension (see \cite{NX16}). {Here $R$ is a local ring of a curve $C$ defined over a perfect field $k$ of characteristic $p>0$. We denote the special point $s \in \Spec R$ and the generic point $\eta \in \Spec R$.  Let $(\mathcal{X}, \Phi)$ be a $\mbQ$-factorial dlt pair of dimension three and projective over $C$ and let  $(X,\Delta):=(\mathcal{X}, \Phi)\times _C {\rm Spec}(R)$.

\begin{theorem} \label{thm:mmp_reductions} Let $(X,\Delta)$ be a  dlt pair as above so that in particular $X$ is three-dimensional $\mbQ$-factorial projective over $\Spec R$. Let $\phi \colon X \to \Spec R$ be the natural projection and suppose that $\Supp \phi^{-1}(s) \subseteq \lfloor \Delta \rfloor$. Then we can run a $(K_X+\Delta)$-MMP over $\Spec R$ which terminates with a minimal model or a Mori fibre space.
\end{theorem}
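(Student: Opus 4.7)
The plan is to mimic the strategy of Theorem \ref{thm:main}, with the projective morphism $\phi \colon X \to \Spec R$ taking the place of the birational morphism $\pi \colon Y \to X$. The key observation is that because $\Spec R$ is one-dimensional, every curve contracted over $\Spec R$ lies in a fibre of $\phi$; consequently any $(K_X+\Delta)$-negative extremal ray of $\NE(X/\Spec R)$ is generated by a curve $C$ contained either in the generic fibre $X_\eta$ or in the special fibre $\phi^{-1}(s)$. Extremal rays in the generic fibre give MMP steps that take place inside the two-dimensional $\mbQ$-factorial dlt pair $(X_\eta, \Delta_\eta)$ over $k(\eta)$, where MMP is known in arbitrary characteristic by \cite{tanaka12, tanaka16_excellent}. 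The essential case is therefore $C \subseteq \phi^{-1}(s) \subseteq \lfloor \Delta \rfloor$.

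Writing $\phi^{-1}(s) = \sum_i a_i S_i$ with $a_i > 0$ and each $S_i$ a prime component of $\lfloor \Delta \rfloor$, the relation $\phi^{-1}(s)\cdot C = 0$ combined with $S_i \cdot C \ge 0$ for every $S_i$ not containing $C$ forces the (unique) component $S_{i_0}$ containing $C$ to satisfy $S_{i_0}\cdot C \le 0$. When the inequality is strict, the ray is of pl-type, and the argument developed in the proof of Theorem \ref{thm:main}---which constructs pl-divisorial contractions and pl-flips in the low-characteristic $\mbQ$-factorial dlt setting by establishing the purely relative F-regularity of the contraction---produces both the contraction and, when it is flipping, its flip. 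If instead $S_i \cdot C = 0$ for every $i$, then any exceptional divisor of the associated contraction would be irreducible, supported in $\phi^{-1}(s)$, hence equal to some $S_{j}$ with $S_{j}\cdot C < 0$, contradicting our assumption; thus the contraction must be of fibre type and yields the desired Mori fibre space. Should the ray still fail to be pl in a putative small case, an auxiliary $\mbQ$-factorial dlt blow-up introducing an additional component of the reduced part that meets $C$ reduces to the previous situation.

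Termination is handled by Shokurov's special termination applied to the stratification of $\lfloor \Delta \rfloor$: on each log canonical centre the flips stabilise after finitely many steps, reducing the problem to termination of the surface MMP on each $S_{i_0}$ and on $X_\eta$, which is known. The main obstacle is to verify that the pl-construction of Theorem \ref{thm:main}, and in particular the purely relative F-regularity of its pl-flipping contractions, applies verbatim once the pl-property has been identified as above; concretely, one must check that the hypothesis $\Supp \phi^{-1}(s) \subseteq \lfloor \Delta \rfloor$ plays exactly the role of the hypothesis $\Exc(\pi) \subseteq \lfloor \Delta \rfloor$ in Theorem \ref{thm:main}, which is a local statement around the contracted locus and should follow from the one-dimensionality of $\Spec R$.
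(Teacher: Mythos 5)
Your reduction of the problem to extremal rays supported on the special fibre, the sign analysis using $\phi^{-1}(s)\cdot C=0$ to produce components $E,E'$ of $\lfloor\Delta\rfloor$ with $E\cdot C<0$ and $E'\cdot C>0$, and the appeal to special termination all match the paper's proof. But there is a genuine gap at the crux of the theorem: the case where $S_i\cdot C=0$ for every component $S_i$ of $\phi^{-1}(s)$. Your trichotomy there ("any exceptional divisor would be some $S_j$ with $S_j\cdot C<0$, contradiction; thus the contraction is of fibre type") silently omits \emph{small} contractions, which contract only curves and are perfectly compatible with $S_i\cdot C=0$ for all $i$. This is not a degenerate corner: it is precisely the case the paper isolates as Proposition \ref{prop:flips_for_reductions} (a flipping contraction of a curve $R\subseteq S$ with $R\cdot S=0$), and it is the reason Theorem \ref{thm:mmp_reductions} does not follow formally from Theorem \ref{thm:main}. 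Your closing sentence does acknowledge a "putative small case" and proposes "an auxiliary dlt blow-up introducing an additional component of the reduced part that meets $C$", but this is not an argument: a dlt modification of $(X,\Delta)$ extracts nothing new here, and what is actually required is Fujino's reduction of flips to pl-flips --- choosing a reduced Cartier divisor $H'$ on $Z$ whose components generate $N^1(\cdot/Z)$ on any higher model, passing to a log resolution of $(X,\Delta+H)$, running a $(K_Y+\Delta_Y+H_Y)$-MMP over $Z$ in which every ray is pl because $h^*H'\equiv_h 0$, and then a $(K_Y+\Delta_Y)$-MMP with scaling of $H_Y$, verifying by an intersection computation on the dlt surfaces $D_j$ that each needed flip satisfies the two-sided hypothesis of Proposition \ref{prop:flips_exist}, and finally checking $\rho(Y/X)=1$ to descend to the flip of $g$. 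None of this is present or implied by your sketch.

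A secondary omission: you never address the existence of the contraction itself when all $S_i\cdot C=0$. Proposition \ref{prop:contractions} requires some $S\subseteq\lfloor\Delta\rfloor$ with $S\cdot R<0$, so it does not apply in that case, and the paper must prove a base point free theorem over $\Spec R$ separately (reducing via \cite{CT17} to the generic and special fibres, and handling the non-normal special fibre by $S_2$-fication, normality of lc centres up to universal homeomorphism from Remark \ref{rem:smaller_centres_normal}, and abundance for slc surfaces). Relatedly, your suggestion that rays "in the generic fibre" are dispatched by the surface MMP is off target: since $R$ is a DVR, $\overline{\mathrm{NE}}(X/\Spec R)$ is already generated by curves in the special fibre, and in any case every MMP step is a modification of the threefold $X$, not of $X_\eta$; the surface results enter only through restriction to fibres in the cone and base point free arguments.
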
 

This article is organised as follows. In Section 2 we gather preliminary results on F-regularity, the MMP in positive characteristic, and $W\mcO$-rationality. In Section 3 we show Theorem \ref{thm:main}, Corollary \ref{cor:wittrationality}, Corollary \ref{cor:plt}, and Corollary \ref{cor:dlt_mod}. In Section 4 we show Theorem \ref{thm:mmp_reductions}. Section 6 pertains to the study of other applications of the main theorem.

\section{Preliminaries}
A scheme $X$ will be called a variety if it is integral, separated, and of finite type over a field $k$. Throught this paper, $k$ is a perfect field of characteristic $p>0$. We refer to \cite{km98} for basic definitions in birational geometry, to \cite{bcekprsw02} for the theory of nef reduction maps and nef dimension, and to \cite{kollar96} for results pertaining to MRCC (maximally-rationally-chain-connected) fibrations. 
We remark that in this paper, unless otherwise stated, if $(X,B)$ is a pair, then $B$ is a $\mathbb Q$-divisor.

We advise the reader to consult \cite[Remark 2.7]{GNT06} in regards to the subtleties of birational geometry over a perfect field. Let us just recall that being klt, plt, lc, or normal is preserved under taking a base change to an uncountable algebraic closure. Similarly for the ampleness, semi-amplenes, nefness and bigness of a line bundle. The same also holds for rational chain connectedness (see \cite[Remark 4.6]{GNT06}). However, this is not the case with $\mbQ$-factoriality.  

\subsection{Relative F-regularity}
The study of relative F-regularity plays a vital role in the proof of the existence of flips for threefolds in characteristic $p>5$, acting as a replacement for vanishing theorems (see \cite{hx13}). 

For the convenience of the reader, we recall the basic definitions and results. This section is based on \cite{HW17}.
\begin{definition} For an F-finite scheme X of characteristic $p>0$ and an effective $\mbQ$-divisor $\Delta$, we say that $(X,\Delta)$ is \emph{globally F-split} if for every $e \in \mbZ_{>0}$, the natural morphism
\[
\mcO_X \to F^e_* \mcO_X(\lfloor(p^e-1)\Delta \rfloor)
\]
splits in the category of sheaves of $\mcO_X$-modules.

We say that $(X,\Delta)$ is \emph{globally F-regular} if for every effective divisor $D$ on $X$ and every big enough $e \in \mbZ_{>0}$, the natural morphism
\[
\mcO_X \to F^e_* \mcO_X(\lfloor(p^e-1)\Delta \rfloor + D)
\]
splits.

We say that $(X,\Delta)$ is \emph{purely globally F-regular}, if the definition above holds for those $D$ which intersect $\lfloor \Delta \rfloor$ properly (see \cite{Das15}*{Definition 2.3(2)}).
\end{definition}
The local versions of the above notions are called F-purity, strong F-regularity, and pure F-regularity, respectively. Moreover, given a morphism $f \colon X \to Y$, we say that $(X,\Delta)$ is F-split, F-regular, and purely F-regular over $Y$, if the corresponding splittings hold locally over $Y$ (see \cite{hx13}*{Definition 2.6}).

Let us recall that the relative inversion of F-adjunction holds.
\begin{lemma}[\cite{Das15} and {\cite[Lemma 2.10]{HW17}}] \label{lemma:relative-inversion-of-adjunction}
Let $(X,S+B)$ be a plt pair where $S$ is a prime divisor, and let $f \colon X \to Z$ be a proper birational morphism between normal varieties { defined over a perfect field of characteristic $p>0$}. Assume that $-(K_X+S+B)$ is $f$-ample and $(\bar{S},B_{\bar{S}})$ is globally $F$-regular over $f(S)$, where $\bar{S}$ is the normalization of $S$, and $B_{\bar{S}}$ is defined by adjunction $K_{\bar{S}} + B_{\bar{S}} = (K_X + S + B)|_{\bar{S}}$. Then $(X,S+B)$ is purely globally F-regular over a Zariski-open neighbourhood of $f(S) \subseteq Z$. \end{lemma}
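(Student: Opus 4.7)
The plan is to prove the desired purely global F-regularity by lifting a splitting from $(\bar{S}, B_{\bar{S}})$ up to $(X, S+B)$ via Grothendieck-Serre duality for Frobenius combined with F-adjunction and a relative Serre-type vanishing.

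First I would reformulate the problem dually. Given any effective Weil divisor $D$ on $X$ whose support meets $\lfloor S+B \rfloor = S$ properly, set $L_e := \lfloor(p^e-1)(S+B)\rfloor + D$. The splitting of $\mathcal{O}_X \to F^e_*\mathcal{O}_X(L_e)$ is equivalent, via Grothendieck duality for the $e$-th iterate of Frobenius, to the existence of a section of the $\mathcal{O}_X$-dual sheaf $F^e_*\mathcal{O}_X(M_e)$ that evaluates to a unit at every point, where $M_e := -(p^e-1)(K_X+S+B) - D$ (using that $B$ has no component along $S$). Since $-(K_X+S+B)$ is $f$-ample and $D$ is fixed, both $M_e$ and $M_e - S$ are $f$-ample for all $e \gg 0$.

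Next, Schwede-style F-adjunction in the plt setting --- where $S$ is normal in codimension one and the different on $\bar{S}$ is precisely $B_{\bar{S}}$ --- produces a short exact sequence
$$0 \to F^e_*\mathcal{O}_X(M_e - S) \to F^e_*\mathcal{O}_X(M_e) \to F^e_*\mathcal{O}_{\bar{S}}(\bar{M}_e) \to 0,$$
with $\bar{M}_e := -(p^e-1)(K_{\bar{S}}+B_{\bar{S}}) - D|_{\bar{S}}$. Pushing forward by $f$ and invoking relative Serre / Fujita vanishing applied to the $f$-ample line bundle attached to $M_e - S$, one obtains $R^1 f_*F^e_*\mathcal{O}_X(M_e - S) = 0$ on a Zariski open neighbourhood of $f(S)$ for $e \gg 0$. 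Hence the restriction map
$$f_*F^e_*\mathcal{O}_X(M_e) \twoheadrightarrow f_*F^e_*\mathcal{O}_{\bar{S}}(\bar{M}_e)$$
is surjective over that neighbourhood.

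Finally, the hypothesis that $(\bar{S}, B_{\bar{S}})$ is globally F-regular over $f(S)$ furnishes, for $e \gg 0$, a local section of the right-hand side mapping to $1 \in \mathcal{O}_{\bar{S}}$; by the surjectivity above I can lift it to a section $\phi$ of the left-hand side, and the corresponding $\mathcal{O}_X$-linear map $F^e_*\mathcal{O}_X(L_e) \to \mathcal{O}_X$ sends $1$ to a unit at every point of $S$, hence in an open neighbourhood of $S$ by Nakayama's lemma. This yields the desired splitting over an open neighbourhood of $f(S)$. The main obstacle is establishing the precise form of the F-adjunction short exact sequence in the second step: the identification of the naive restriction to $S$ with the adjunction trace onto $\bar{S}$ is the technical heart of \cite{Das15}; the remaining steps reduce to standard applications of Grothendieck duality and Serre-type vanishing for relatively ample sheaves.
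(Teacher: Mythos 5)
Your proposal is correct and follows the same route as the sources the paper cites for this lemma (\cite{Das15} and \cite[Lemma 2.10]{HW17}, cf.\ \cite[Theorem 3.1]{hx13}): dualize the splitting via Grothendieck duality for Frobenius, restrict through the F-adjunction exact sequence onto the normalization $\bar S$, kill $R^1f_*$ of the kernel by relative Serre--Fujita vanishing using the $f$-ampleness of $-(K_X+S+B)$, and lift the unit section over a neighbourhood of $f(S)$. The paper gives no proof of its own, deferring entirely to those references, so there is nothing further to compare; the two points you rightly flag as the only delicate ones are that the identification of the quotient with $F^e_*\mcO_{\bar S}(\bar M_e)$ (i.e.\ that the F-different agrees with the divisorial different on $\bar S$, even when $S$ is not yet known to be normal) is precisely the technical content of \cite{Das15}, and that $\mcO_X(M_e-S)$ is only a Weil divisorial sheaf, so the vanishing must be applied after expressing it as one of finitely many fixed coherent sheaves twisted by a large multiple of an $f$-ample Cartier divisor.
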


Furthermore, divisorial centres of purely F-regular pairs are normal.
\begin{proposition}  \label{proposition:normality_of_plt_centres} Let $(X,S+B)$ be a purely F-regular pair defined over a perfect field $k$ of characteristic $p>0$ where $S$ is a prime divisor. Then $S$ is normal.
\end{proposition}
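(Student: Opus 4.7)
My approach follows the $F$-adjunction philosophy of Schwede (as developed for purely $F$-regular pairs by Das). The pure $F$-regularity of $(X,S+B)$ should provide enough $F$-splittings that, when restricted to $S$, force $S$ to agree with its normalisation. The statement is local, and near the generic point of $S$ the pair $(X,S+B)$ is plt, so $X$ is regular along $S$ generically and $S$ is automatically normal there; the content of the proposition is at closed points of the non-normal locus of $S$.

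The first step is to unpack the definition: for every effective divisor $D$ whose support meets $S$ properly and every $e\gg 0$, there is a local splitting
\[
\phi_{e,D}\colon F^e_*\mcO_X\bigl(\lfloor(p^e-1)(S+B)\rfloor+D\bigr)\longrightarrow \mcO_X.
\]
Since $B\ge 0$, we have $(p^e-1)S\le \lfloor(p^e-1)(S+B)\rfloor$, so by precomposing with the natural inclusion $\phi_{e,D}$ yields a splitting of the map $\mcO_X\to F^e_*\mcO_X((p^e-1)S+D)$.

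The crucial step is to restrict this splitting to $S$ via Grothendieck duality for the closed embedding $S\hookrightarrow X$. This produces a canonical identification
\[
\mathcal{H}om_{\mcO_X}\!\bigl(F^e_*\mcO_X((p^e-1)S+D),\mcO_X\bigr)\otimes_{\mcO_X}\mcO_S
\;\simeq\;
\mathcal{H}om_{\mcO_S}\!\bigl(F^e_*\nu_*\mcO_{\bar S}(D|_{\bar S}),\mcO_S\bigr),
\]
where $\nu\colon \bar S\to S$ is the normalisation of $S$. Under this isomorphism $\phi_{e,D}$ descends to a splitting $\psi_{e,D}\colon F^e_*\nu_*\mcO_{\bar S}(D|_{\bar S})\to \mcO_S$ of the natural map $\mcO_S\to F^e_*\nu_*\mcO_{\bar S}(D|_{\bar S})$.

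Finally, I would let $D$ vary: any local section of the conductor quotient $\nu_*\mcO_{\bar S}/\mcO_S$ can be trapped by choosing $D$ a sufficiently large effective divisor meeting $S$ properly, and the resulting splitting $\psi_{e,D}$ shows that the section must in fact lie in $\mcO_S$. Hence $\nu_*\mcO_{\bar S}=\mcO_S$, $\nu$ is an isomorphism, and $S$ is normal. The main obstacle I anticipate is the Grothendieck-duality identification in the middle step, in particular the bookkeeping of the fractional part of $(p^e-1)B$, which on $\bar S$ becomes the different $B_{\bar S}$; the strengthening from mere $F$-purity (which yields only seminormality of $S$) to full normality comes entirely from the freedom to add the auxiliary divisor $D$ in the definition of pure $F$-regularity.
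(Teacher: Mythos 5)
The paper's own proof is a one-line citation: the statement is exactly \cite[Theorem A]{Das15} applied after base change to $\overline{k}$ (normality then descends to $S$ by faithfully flat descent, $k$ being perfect). You are therefore reproving Das's theorem from scratch, and while your sketch has the right overall shape --- restrict the splittings to $S$ and use the auxiliary divisor $D$ to kill the conductor --- the middle step conceals the entire difficulty. The identification
\[
\mathcal{H}om_{\mcO_X}\bigl(F^e_*\mcO_X((p^e-1)S+D),\mcO_X\bigr)\otimes_{\mcO_X}\mcO_S \;\simeq\; \mathcal{H}om_{\mcO_S}\bigl(F^e_*\nu_*\mcO_{\bar S}(D|_{\bar S}),\mcO_S\bigr)
\]
is not a consequence of Grothendieck duality for $S\hookrightarrow X$ and is not correct as stated. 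What restriction of compatible $p^{-e}$-linear maps actually produces is a map $F^e_*\mcO_S(\cdots)\to\mcO_S$ on $S$ itself; the normalisation $\bar S$ does not enter. That weaker output only shows $S$ is F-pure as a scheme, hence weakly normal --- and this is sharp: there exist F-pure pairs $(X,S)$ with $S$ non-normal (e.g.\ the pinch point), so no bookkeeping on $\mcO_S$ alone can yield normality.

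To bring $\nu_*\mcO_{\bar S}$ into the picture you need the separate, nontrivial input that the conductor ideal of $\mcO_S\subseteq\nu_*\mcO_{\bar S}$ is uniformly F-compatible, i.e.\ preserved by every restricted map $\bar\phi\colon F^e_*\mcO_S\to\mcO_S$. Granting that, the efficient way to finish (essentially Das's argument) is by contradiction rather than by ``trapping sections'': if $S$ is not normal, the conductor defines a nonempty proper closed subscheme $Z\subsetneq S$; choose $D$ effective, not containing $S$ but passing through a point of $Z$; pure F-regularity gives a surjective $\phi_{e,D}$ whose restriction to $S$ sends $F^e_*$ of an ideal contained in the conductor onto all of $\mcO_S$, while compatibility forces its image into the conductor --- a contradiction. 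Your final step also needs justification: a splitting of $\mcO_S\to F^e_*\nu_*\mcO_{\bar S}(D|_{\bar S})$ does force $\nu_*\mcO_{\bar S}=\mcO_S$, but only after extending the retraction to the total quotient ring; it is not immediate. As written, the proposal defers the actual content of the proposition to the step you yourself flag as the main obstacle.
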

\begin{proof}
This follows from \cite[Theorem A]{Das15} by taking a base change to the algebraic closure of $k$.
\end{proof}

\begin{comment}
In order to deal with non-$\mbQ$-factorial singularities, we also need a relative version of the above result.
\begin{proposition} \label{proposition:relative-rationality} Let $(X,S)$ be a plt three-dimensional pair defined over an algebraically closed field of characteristic $p>5$, and let $f \colon X \to Z$ be a proper birational morphism between normal varieties such that $\dim f(S)=2$. If $-(K_X+S)$ is $f$-ample, then $Z$ is strongly F-regular along $f(S)$.
\end{proposition}
This does not follow from Proposition \ref{proposition:plt_are_f-regular} directly, because $K_Z + f(S)$ is almost never $\mbQ$-Cartier. Further, let us note that we will need to apply Proposition 
\ref{proposition:relative-rationality} in the case when $X$ is not $\mbQ$-factorial.
\begin{proof}
As above, $S$ is normal. The normalization morphism $f(S)^\nu \to f(S)$ is affine, and so, by \cite{hx13}*{Theorem 3.1}, the pair $(S, \Delta_{\mathrm{diff}})$, defined by adjunction $K_S + \Delta_{\mathrm{diff}} = (K_X + S)|_S$, is globally $F$-regular over $f(S)$ (see Remark \ref{remark:relative_f-split}). Lemma \ref{lemma:relative-inversion-of-adjunction} and \cite{hx13}*{Lemma 2.12} imply that $Z$ is strongly $F$-regular along $f(S)$.
\end{proof}

In the proposition above, we used a relative version of inversion of adjunction, which is a consequence of \cite{Das15}*{Theorem B}. The proof is very similar to \cite{Das15}*{Corollary 5.4} and \cite{CTW15b}*{Lemma 2.7}, but we include it here for the convenience of the reader.
\end{comment}

\subsection{MMP for threefolds in arbitrary positive characteristic}
In this subsection we gather some results on the Minimal Model Program for threefolds which are valid in any characteristic $p>0$.

Firstly, the cone theorem is known for pseudo-effective adjoint divisors.
\begin{comment}
\begin{theorem}[{\cite[Theorem 0.6]{keel99}, cf.\ \cite[Lemma 2.2]{GNT06}, \cite[Theorem 5.3]{hx13}}] \label{thm:cone}
Let $(X,\Delta)$ be a three-dimensional $\mbQ$-factorial log pair, projective over a perfect field $k$. Assume that $K_X+\Delta$ is pseudo-effective. Then there exists a countable collection of curves $\{C_i\}$ such that 
\begin{enumerate}
\item $\overline{\mathrm{NE}}_1(X) = \overline{\mathrm{NE}}_1(X) \cap (K_X+\Delta)_{\geq 0} + \sum_i \mbR_{\geq 0}[C_i]$.
\item All but a finite number of the $C_i$ are rational and satisfy $0 < -(K_X+\Delta) \cdot C_i \leq 3$.
\item The collection of rays $\{\mbR[C_i]\}$ does not accumulate in the halfspace $(K_X+\Delta)_{<0}$.
\end{enumerate}
\end{theorem}
\end{comment}
\begin{theorem}[{\cite[Lemma 2.2]{GNT06}}] \label{thm:cone}

Let $(X,\Delta)$ be a three-dimensional $\mbQ$-factorial log canonical pair which is projective over a separated scheme $U$ of finite type over a perfect field $k$ of characteristic $p>0$. Assume that $K_X+\Delta$ is pseudo-effective over $U$. Then, for every ample $\mbQ$-divisor $A$, there exist finitely many curves $C_1, \ldots, C_r$ such that
\[
\overline{\mathrm{NE}}(X/U) = \overline{\mathrm{NE}}(X/U)_{K_X+\Delta+A\geq 0} + \sum_{i=1}^r \mbR_{\geq 0}[C_i].
\]
\end{theorem}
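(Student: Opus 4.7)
The plan is to deduce this relative cone theorem from the absolute version of Keel's cone theorem (in its positive-characteristic formulation for log canonical threefolds, see \cite{keel99}) by compactifying the base and transferring the pseudo-effectivity. The key observation is that curves contracted over $U$ sit inside the Mori cone of a suitable projective compactification of $X$, and the ampleness of $A$ combined with the bend-and-break length bound on $(K_X+\Delta)$-negative extremal rays forces only finitely many of them to be $(K_X+\Delta+A)$-negative.

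First, I would use Nagata's theorem together with Chow's lemma to embed $U$ as an open subscheme of a projective scheme $\bar U$ over $k$, and extend $\pi \colon X \to U$ to a projective morphism $\bar \pi \colon \bar X \to \bar U$ with $\bar X$ projective over $k$ and containing $X$ as an open subscheme. After passing to a $\mbQ$-factorial birational modification if necessary and extending $\Delta$ to an effective boundary $\bar \Delta$ (by adding suitable components supported on $\bar X \setminus X$), I would aim to arrange a $\mbQ$-factorial log canonical projective pair $(\bar X, \bar \Delta)$ which restricts to $(X,\Delta)$ on $X$.

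Next, I would pick an ample $\mbQ$-divisor $H$ on $\bar U$ and extend $A$ to an ample $\mbQ$-divisor $\bar A$ on $\bar X$ (possibly after twisting by a large multiple of $\bar \pi^* H$). Since $K_X+\Delta$ is pseudo-effective over $U$, for $N \gg 0$ the class $K_{\bar X}+\bar \Delta+N \bar \pi^* H$ is absolutely pseudo-effective on $\bar X$. Applying Keel's absolute cone theorem to this class with ample perturbation $\bar A$ then yields finitely many curves $\bar C_1,\ldots,\bar C_r$ with
\[
\overline{\mathrm{NE}}(\bar X) = \overline{\mathrm{NE}}(\bar X)_{K_{\bar X}+\bar \Delta+N \bar \pi^* H+\bar A \geq 0} + \sum_{i=1}^r \mbR_{\geq 0}[\bar C_i].
\]

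Finally, the relative cone $\overline{\mathrm{NE}}(X/U)$ embeds into $\overline{\mathrm{NE}}(\bar X)$ as the subcone of classes represented by curves that are $\bar \pi$-contracted into $U$; on such classes $\bar \pi^* H$ intersects trivially. Intersecting the above decomposition with $\overline{\mathrm{NE}}(X/U)$ and retaining only those $\bar C_i$ that are contracted over $U$ produces the desired finite expression. The principal obstacle is the compactification step, namely extending $(X,\Delta)$ to a projective $\mbQ$-factorial log canonical pair so that Keel's theorem applies; one may need to work on a suitable log resolution and adjust boundary coefficients. Once this is in place, the finiteness from ampleness of $\bar A$ and the length bound on extremal rays coming from bend-and-break is routine in this setting.
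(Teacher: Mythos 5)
The paper does not actually prove this statement: it is imported verbatim from \cite{GNT06}*{Lemma 2.2}, so there is no internal argument to compare yours against. Your overall strategy --- compactify the base and the total space, extend the boundary so as to land in the hypotheses of the absolute cone theorem for projective threefold pairs in positive characteristic, and then cut the resulting decomposition down to the relative cone --- is the natural route, and the finiteness mechanism you invoke (local discreteness of the negative extremal rays plus compactness of the $(K+\Delta+A)$-nonpositive part of a slice of the cone) is indeed how the ample perturbation produces finitely many rays.

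The genuine gap is the pseudo-effectivity transfer. You assert that since $K_X+\Delta$ is pseudo-effective over $U$, the class $K_{\bar X}+\bar\Delta+N\bar\pi^*H$ is pseudo-effective on $\bar X$ for $N\gg 0$; this does not follow and is the step on which the whole reduction rests. Relative pseudo-effectivity is a limit of relatively big classes $K_X+\Delta+\epsilon A$, and while each relatively big class becomes big after twisting by $N(\epsilon)\bar\pi^*H$, the required twist can blow up as $\epsilon\to 0$, so no fixed $N$ works a priori; moreover relative pseudo-effectivity over $U$ gives no control whatsoever over the behaviour of $K_{\bar X}+\bar\Delta$ on curves lying over $\bar U\setminus U$ or in horizontal directions. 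Worse, the version of the absolute cone theorem you want to quote (Keel's Theorem 0.6, in the form recorded in the literature and in earlier drafts of this paper) takes as input the pseudo-effectivity of $K_{\bar X}+\bar\Delta$ for the \emph{pair} $(\bar X,\bar\Delta)$, not of an auxiliary twisted class, and that hypothesis certainly does not follow from relative pseudo-effectivity over $U$. There is also a secondary, fixable imprecision at the end: $N_1(X/U)$ is not a subspace of $N_1(\bar X)$ (the comparison map can have a kernel), and the face of $\overline{\mathrm{NE}}(\bar X)$ cut out by $(\bar\pi^*H)^{\perp}$ is $\overline{\mathrm{NE}}(\bar X/\bar U)$, which contains classes of curves over $\bar U\setminus U$; one must argue separately that the decomposition descends to $\overline{\mathrm{NE}}(X/U)$ after discarding those contributions. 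To repair the argument you should either prove the absolute cone theorem hypotheses hold for a carefully chosen compactified pair, or avoid the reduction and follow \cite{GNT06}*{Lemma 2.2} directly.
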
 
%\noindent { In \cite[Lemma 2.2]{GNT06} it is assumed that $U$ is separated, but this is not necessary. Indeed, since $U$ is quasi-compact, it is covered by a finite number of affine open subsets $\bigcup U_i$, and the assertion of the theorem can be verified independently for each $U_i$.}  

Secondly, a variant of the base point free theorem for threefolds is known in which the contraction is a map onto an algebraic space (see \cite[Theorem 0.5]{keel99}, cf.\ \cite[Theorem 5.2]{hx13}) and in fact the full base point free theorem for threefolds is valid when $k=\Fp$.
\begin{definition} We say that a dlt pair $(X,\Delta)$ has \emph{normal divisorial centres up to a universal homeomorphism} if for every irreducible divisor $D \subseteq \lfloor \Delta \rfloor$, the normalisation $f \colon \tilde D \to D$ is a universal homeomorphism.
\end{definition}

\begin{proposition}[{\cite[Lemma 2.4]{GNT06} cf.\ \cite[Theorem 5.4]{hx13}}] \label{prop:contractions} Let $(X,\Delta)$ be a three-dimensional $\mbQ$-factorial dlt pair admitting a projective morphism $\pi \colon X \to U$ to a  scheme $U$ defined over a perfect field of characteristic $p>0$. Assume that $(X,\Delta)$ has normal divisorial centres up to a universal homeomorphism, and let $R$ be an extremal ray of $K_X+\Delta$ over $U$ such that $R \cdot S < 0$ for an irreducible divisor $S \subseteq \lfloor \Delta \rfloor$. Then the contraction $X\to Z$ of $R$ exists as a projective morphism of normal varieties,  and $\rho(X/Z)=1$.
\end{proposition}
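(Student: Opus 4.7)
My plan is to apply Keel's base point free theorem \cite{keel99} to a carefully chosen supporting divisor of the ray $R$. The only substantively new input beyond the $p>5$ strategy of \cite[Theorem 5.4]{hx13} is the hypothesis that $S$ is normal up to a universal homeomorphism, which enables the semi-ampleness check on $S$ to be reduced to a surface MMP computation on $\tilde S$.

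First, I would use the relative cone theorem (Theorem \ref{thm:cone}) to produce a $\pi$-nef $\mbQ$-divisor of the form $L=\lambda(K_X+\Delta)+A$, with $A$ a $\pi$-ample $\mbQ$-divisor and $\lambda>0$, such that $L\cdot R=0$ while $L$ is strictly positive on $\overline{\mathrm{NE}}(X/U)\setminus R$. In particular, $L$ is $\pi$-big. Next, I would observe that the $\pi$-relative exceptional locus $\mathbb{E}_\pi(L)$ is contained in $S$: indeed, any curve $C\subseteq X$ contracted over $U$ with $L\cdot C=0$ has numerical class in $R$, so $S\cdot C=S\cdot R<0$, forcing $C\subseteq S$.

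To apply Keel's theorem, the remaining task is to show that $L|_{\mathbb{E}_\pi(L)}$ is $\pi$-semi-ample, for which it suffices to prove that $L|_S$ is $\pi$-semi-ample. Here I would invoke the hypothesis that the normalisation $f\colon\tilde S\to S$ is a universal homeomorphism, together with \cite[Lemma~1.4]{keel99}, which asserts that in positive characteristic semi-ampleness of a nef line bundle descends along universal homeomorphisms. Thus it suffices to verify that $L|_{\tilde S}$ is $\pi$-semi-ample. By adjunction,
\[
L|_{\tilde S}\sim_{\mbQ}\lambda(K_{\tilde S}+B_{\tilde S})+A|_{\tilde S},
\]
where $(\tilde S,B_{\tilde S})$ is a two-dimensional dlt pair and $A|_{\tilde S}$ is $\pi$-ample. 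The base point free theorem for dlt surfaces over excellent bases, due to Tanaka \cite{tanaka12,tanaka16_excellent} and valid in any positive characteristic, then yields that $L|_{\tilde S}$ is $\pi$-semi-ample.

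By Keel's theorem, $L$ itself is $\pi$-semi-ample, and the Stein factorisation of the morphism defined by a sufficiently divisible multiple of $L$ provides the contraction $\pi'\colon X\to Z$ over $U$, with $Z$ normal. Because $L$ contracts precisely the curves whose class lies in $R$, we have $\rho(X/Z)=1$. The principal obstacle is the semi-ampleness step: in the $p>5$ setting it is handled via the genuine normality of $S$ (a consequence of relative F-regularity, Proposition~\ref{proposition:normality_of_plt_centres}), whereas in our low-characteristic setting $S$ may fail to be normal and we must rely instead on the universal-homeomorphism descent of semi-ampleness combined with surface MMP on $\tilde S$.
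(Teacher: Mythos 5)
Your proposal is correct and follows essentially the same route the paper takes: the paper defers to \cite[Lemma 2.4]{GNT06}, whose argument (and the sketch left in the paper's source) is exactly this one --- choose a nef supporting divisor $L$ for $R$, note $\mathbb{E}(L)\subseteq S$, reduce semi-ampleness of $L|_S$ to $L|_{\tilde S}$ via Keel's descent along the universal homeomorphism $\tilde S\to S$, conclude by Tanaka's base point free theorem for surfaces, and then apply Keel's theorem. The only cosmetic difference is that you phrase the adjunction for a dlt rather than plt pair; both are covered by the surface MMP over excellent bases.
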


\subsection{$W\mcO$-rationality and normality of divisorial centres}
In this subsection we recall some definitions and results from \cite{GNT06}. First, given a $k$-scheme $X$, over a perfect field $k$ of positive characteristic $p>0$, we define the sheaf $W(\mcO_X)$ as follows
\[
W(\mcO_X)(U) \defeq W(\mcO_X(U)) \text{ for an open subset } U \subseteq X.
\] 
		Here $W \colon \mathrm{Alg}_{\mathbb{F}_p} \to \mathrm{Alg}_{\mbZ}$ is the Witt vector functor (see \cite[Subsection 2.5]{GNT06}).

Let $\mcA(X)$ be the abelian category of sheaves of abelian groups on $X$, and let $\mcA(X)_{\mbQ}$ be the quotient category
\[
\mcA(X)_{\mbQ} \defeq \mcA(X) \, / \, \{\mcF \in \mcA(X) \mid n\mcF = 0 \text{ for some } n \in \mbZ_{>0} \}.
\]
We set $W\mcO_{X,\mbQ}$ to be the image of $W\mcO_X$ in $\mcA(X)_{\mbQ}$ under the natural projection. Given a morphism of $k$-schemes $f \colon Y \to X$, one can define derived functors $R^if_*(W\mcO_{Y,\mbQ})$ for $i\geq 0$.
\begin{definition} We say that a $k$-variety $X$ for a perfect field $k$ of positive characteristic has \emph{$W\mcO$-rational singularities}, if there exists a (equivalently, for any) resolution of singularities $\phi \colon Y \to X$ such that $R^i\phi_*(W\mcO_{Y,\mbQ}) = 0$ for $i>0$.
\end{definition}

The following result asserts that normality of divisorial centres and $W\mcO$-rationality are preserved under pl-contractions.
\begin{proposition}[{\cite[Proposition 3.4 and 3.11]{GNT06}}] \label{prop:GNT}
Let $(X,\Delta)$ be a $\mbQ$-factorial three-dimensional dlt pair defined over a perfect field $k$ of characteristic $p>0$, with normal divisorial centres up to a universal homeomorphism. Let $g \colon X \to Z$ be a projective birational morphism onto a normal variety $Z$ such that
\begin{itemize}
	\item $-(K_X+\Delta)$ is $g$-ample,
	\item $-S$ is $g$-ample for an irreducible component $S$ of $\lfloor \Delta \rfloor$.
	%\item for every irreducible component $S_i$ of $\lfloor \Delta \rfloor$, the normalisation $\tilde S_i \to S_i$ is a universal homeomorphism.
\end{itemize}
Then the irreducible components of $\lfloor g_*\Delta \rfloor$ are normal up to a universal homeomorphism. Moreover, if $X$ has $W\mcO$-rational singularities, then so does $Z$.
\end{proposition}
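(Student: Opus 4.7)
The plan is to treat the two assertions separately, both making essential use of relative inversion of F-adjunction along $S$, which is what the $g$-ampleness of both $-S$ and $-(K_X + \Delta)$ is designed to unlock. As a preliminary reduction, I perturb $\Delta$ by a small multiple of a $g$-anti-ample divisor to arrive at a plt pair $(X, S + B)$ with $\lfloor S + B \rfloor = S$ and $-(K_X + S + B)$ still $g$-ample; adjunction along $S$ then produces a two-dimensional klt pair $(\bar S, B_{\bar S})$ on the normalisation of $S$, with $-(K_{\bar S} + B_{\bar S})$ ample over $g(S) \subseteq Z$.

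For part (a), let $D$ be an irreducible component of $\lfloor g_* \Delta \rfloor$ and let $T \subseteq \lfloor \Delta \rfloor$ be its strict transform. The hypothesis supplies a universal homeomorphism $\tilde T \to T$; forming $\tilde T \to T \xrightarrow{g|_T} D$ and taking the Stein factorisation $\tilde T \xrightarrow{h} T' \xrightarrow{f} D$, with $T'$ normal, one finds $T' = \widetilde D$, so the statement reduces to showing that the finite birational map $f$ is a universal homeomorphism, i.e., that the fibres of $\tilde T \to D$ are geometrically connected. When $D = g(S)$ this is provided by relative F-adjunction: Lemma \ref{lemma:relative-inversion-of-adjunction} applied to $(X, S+B)$ yields pure global F-regularity over a neighbourhood of $g(S)$, given that $(\bar S, B_{\bar S})$ is globally F-regular over $g(S)$ — a two-dimensional statement for a klt pair whose log canonical is relatively anti-ample, which in the relative setting is available in arbitrary positive characteristic — and Proposition \ref{proposition:normality_of_plt_centres} applied after base change to $\bar k$ then gives that $g(S)$ is normal. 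For $D \neq g(S)$, the fibres of $g|_T$ sit inside the fibres of $g$, which are connected (as $g_*\mcO_X = \mcO_Z$) and in fact rationally chain connected over $\bar k$ by Theorem \ref{thm:cone} together with the length bound for extremal rays; combined with a Shokurov–Koll\'ar-type connectedness statement for $\lfloor \Delta \rfloor$ along fibres of $g$, this forces the fibres of $h$ to be connected and $f$ to be bijective on geometric points.

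For part (b), the Leray spectral sequence for $g$ reduces the $W\mcO$-rationality of $Z$ to the vanishing of $R^i g_* W\mcO_{X, \mbQ}$ for $i > 0$. Since $-S$ is $g$-ample, the exceptional locus of $g$ is set-theoretically contained in $S$, so a short exact sequence relating $W\mcO_X$ modulo $p$-torsion to the Witt sheaf of a thickening of $S$ splits the target vanishing into two pieces: the ideal-sheaf part, which vanishes as a Witt-vector analogue of Kodaira vanishing coming out of the pure relative F-regularity established in part (a); and the part living on $S$, which reduces via the universal homeomorphism $\bar S \to S$ — innocuous after tensoring with $\mbQ$ — to the $W\mcO$-rationality of klt surface singularities in positive characteristic. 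The main obstacle throughout is the F-regularity input in part (a), namely establishing relative global F-regularity of the adjoint klt surface pair $(\bar S, B_{\bar S})$ over $g(S)$ in every characteristic $p > 0$; once this two-dimensional input is secured, both the normality statement and the Witt-vector vanishing follow by largely formal manipulations with F-adjunction, Stein factorisations, and the Leray spectral sequence.
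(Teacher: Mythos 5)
First, note that the paper does not actually prove Proposition \ref{prop:GNT}: it is quoted directly from \cite{GNT06} (Propositions 3.4 and 3.11), and the proof there proceeds by restricting to the normalised divisorial centres and running two-dimensional arguments (relative base point freeness and rationality for birational morphisms of klt surface pairs, Stein factorisation, and surface-level $W\mcO$-vanishing), not by threefold F-regularity. So your route is genuinely different from the cited one --- and unfortunately it contains a gap at its load-bearing step.

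The gap is the claim that the adjoint klt surface pair $(\bar S, B_{\bar S})$, having relatively anti-ample log canonical divisor, is globally F-regular over $g(S)$ ``in arbitrary positive characteristic''. This is false for $p\le 5$, and its failure is precisely the difficulty this paper is built to circumvent (see the introduction). Concretely: since $-S$ is $g$-ample and $g$ is birational, $\Exc(g)\subseteq S$, so either $g$ is small or $g$ contracts exactly $S$; in the latter case $g(S)$ is a point or a curve and $(\bar S, B_{\bar S})$ is a klt log del Pezzo (or a log-conic-bundle-type pair), which need not be globally F-split, let alone globally F-regular, in characteristic $\le 5$. Even when $g|_{\bar S}$ is birational onto a surface germ, klt surface singularities fail to be strongly F-regular for $p\le 5$ (Hara), so a klt surface pair with relatively anti-ample or trivial log canonical over a surface germ is not in general relatively globally F-regular; this is why Lemma \ref{lemma:rel_surfaces} only delivers \emph{pure} F-regularity along a boundary \emph{curve}, obtained by F-adjunction from a one-dimensional pair, and why Proposition \ref{prop:flips_exist} needs the auxiliary divisor $E'$ with $R\cdot E'>0$ to manufacture such a curve. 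Since both your part (a) (via Lemma \ref{lemma:relative-inversion-of-adjunction}) and your part (b) (the ``Witt-vector Kodaira vanishing'') feed on this input, both collapse. Two secondary problems: Proposition \ref{proposition:normality_of_plt_centres} yields normality of $S$, not of $g(S)$ --- descending pure F-regularity to $(Z, g_*(S+B))$ needs $K_Z+g_*(S+B)$ to be $\mbQ$-Cartier, which fails when $g$ is a flipping contraction; and the connectedness of the fibres of $\tilde T\to D$ for $D\ne g(S)$ is asserted via a Shokurov--Koll\'ar-type connectedness statement whose standard proof relies on Kawamata--Viehweg vanishing, unavailable here. The repair is to argue as in \cite{GNT06}: adjoin to each normalised centre $\tilde T$, where $(\tilde T,\mathrm{Diff})$ is dlt with relatively anti-ample log canonical divisor, and use two-dimensional relative contraction theory --- valid in every characteristic --- to control the Stein factorisation and the relevant $R^1$; the $W\mcO$-rationality statement reduces to surfaces in the same way.
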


Lastly, let us state the following result which shows that $W\mcO$-rationality and normality of divisorial centres up to a universal homeomorphism {are} preserved by curve extractions. The proposition will be used to study the behaviour of these notions under flips as in \cite{GNT06}.
\begin{proposition} \label{prop:ascend_witt_rationality} Let $k$ be a perfect field of characteristic $p>0$, and let $g \colon Y \to Z$ be a proper birational morphism between normal threefolds over $k$ such that every fibre of $g$ is at most one-dimensional. Assume that $(Y, \Delta)$ is dlt for some effective $\mbQ$-divisor $\Delta$. Then
\begin{itemize}
	\item if $R^1g_*(W\mcO_{Y,\mbQ}) = 0$ and irreducible components of $\lfloor g_*\Delta \rfloor$ are normal up to a universal homeomorphism, then $(Y,\Delta)$ has normal divisorial centres up to a universal homeomorphism;
	\item if $Z$ has $W\mcO$-rational singularities, then so does $Y$. 
\end{itemize} 
\end{proposition}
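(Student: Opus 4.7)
My plan is to follow the strategy of \cite{GNT06}*{Section 3}, adapted to the present setting where $g$ is a general proper birational morphism with at most $1$-dimensional fibres rather than a pl-contraction. Both parts rest on the Leray spectral sequence applied to a common log resolution $\phi \colon \tilde Y \to Y$ chosen so that the composition $\psi := g \circ \phi \colon \tilde Y \to Z$ is also a resolution of singularities of $Z$.

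For part (2), I would start from the vanishing $R^i \psi_* W\mcO_{\tilde Y, \mbQ} = 0$ for $i > 0$, which follows from $Z$ having $W\mcO$-rational singularities. In the Leray spectral sequence
\[
E_2^{p,q} = R^p g_* R^q \phi_* W\mcO_{\tilde Y, \mbQ} \Longrightarrow R^{p+q} \psi_* W\mcO_{\tilde Y, \mbQ},
\]
the terms with $p \geq 2$ vanish because $g$ has at most $1$-dimensional fibres, so the spectral sequence degenerates at $E_2$. Combined with $\phi_* W\mcO_{\tilde Y, \mbQ} = W\mcO_{Y, \mbQ}$ (a consequence of the normality of $Y$), the degeneration yields $R^1 g_* W\mcO_{Y, \mbQ} = 0$ as well as $R^j g_* R^i \phi_* W\mcO_{\tilde Y, \mbQ} = 0$ for all $i \geq 1$ and $j \in \{0,1\}$; equivalently, $Rg_*(\tau_{\geq 1} R\phi_* W\mcO_{\tilde Y, \mbQ}) = 0$. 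Since $g$ is an isomorphism outside $\Exc(g)$ and each $R^i \phi_* W\mcO_{\tilde Y, \mbQ}$ is supported in the non-regular locus of $Y$, a careful support-dimension analysis on the overlap of these two loci should then yield $R^i \phi_* W\mcO_{\tilde Y, \mbQ} = 0$ for $i \geq 1$, i.e.\ $Y$ has $W\mcO$-rational singularities.

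For part (1), let $D$ be an irreducible component of $\lfloor \Delta \rfloor$. Since $\dim D = 2$ and fibres of $g$ are at most $1$-dimensional, $g(D)$ has dimension $1$ or $2$. In the generic case $\dim g(D) = 2$, the map $D \to D' := g(D)$ is birational, $D'$ is an irreducible component of $\lfloor g_*\Delta \rfloor$, and by hypothesis $\widetilde{D'} \to D'$ is a universal homeomorphism. I would then apply the Witt-vector exact sequences
\[
0 \to W\mcO_Y(-D)_{\mbQ} \to W\mcO_{Y,\mbQ} \to W\mcO_{D,\mbQ} \to 0, \quad 0 \to W\mcO_Z(-D')_{\mbQ} \to W\mcO_{Z,\mbQ} \to W\mcO_{D',\mbQ} \to 0,
\]
push the first one forward along $g$, and use the hypothesis $R^1 g_* W\mcO_{Y,\mbQ} = 0$ together with a relative vanishing $R^1 g_* W\mcO_Y(-D)_{\mbQ} = 0$ (coming from $-D$ being $g|_D$-nef and a Mumford-Grauert type argument on the $1$-dimensional exceptional fibres) to identify $g_* W\mcO_{D,\mbQ}$ with $W\mcO_{D',\mbQ}$. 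The normality up to universal homeomorphism of $D'$ then descends to $D$. In the remaining case, where $D$ is $g$-exceptional and contracted to a curve, $D \to g(D)$ is a ruled surface fibration, and I would combine the dlt structure of $(Y,\Delta)$ with part (2) to conclude normality of $D$ up to a universal homeomorphism directly.

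The main obstacle will be the bootstrapping step in part (2): passing from the vanishing of $Rg_*$ applied to the higher $R^i \phi_* W\mcO_{\tilde Y, \mbQ}$ to the vanishing of the sheaves themselves on $Y$. Since $Rg_* F = 0$ does not in general imply $F = 0$ for a sheaf $F$ supported on $\Exc(g)$, one must exploit the specific structure of the sheaves involved, namely their Frobenius-linearity, torsion-freeness in the quotient category $\mcA(Y)_{\mbQ}$, and the restriction on their supports coming from the intersection of the $\phi$- and $g$-exceptional loci. An analogous difficulty was addressed in \cite{GNT06}*{Proposition 3.11} in the pl-contraction setting, and I would expect that essentially the same argument, rephrased to accommodate the possibly $2$-dimensional $g$-exceptional locus, goes through here.
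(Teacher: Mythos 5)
Your architecture is the right one: the paper's own ``proof'' of this proposition is a one-line citation of \cite{GNT06}*{Propositions 3.8 and 3.12, Lemma 2.5}, and those results are established essentially along the lines you sketch (a Leray spectral sequence for a common resolution in the second bullet, and ideal-sheaf/normalisation sequences in the first). But two steps of your reconstruction are genuinely problematic, and a third case is not addressed.

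In part (1), you derive the identification $g_*W\mcO_{D,\mbQ}\simeq W\mcO_{D',\mbQ}$ from a vanishing $R^1g_*W\mcO_Y(-D)_{\mbQ}=0$ justified by ``$-D$ being $g|_D$-nef and a Mumford--Grauert type argument.'' No such relative $H^1$-vanishing for the ideal sheaf is available: Grauert--Riemenschneider and its relatives fail in positive characteristic, and nefness of $-D$ on one-dimensional fibres does not control $R^1$ of $W\mcO_Y(-D)_{\mbQ}$. The argument that actually works goes around the long exact sequence the other way: from
\[
R^1g_*W\mcO_{Y,\mbQ}\to R^1g_*W\mcO_{D,\mbQ}\to R^2g_*W\mcO_Y(-D)_{\mbQ}
\]
one gets $R^1g_*W\mcO_{D,\mbQ}=0$, the left term vanishing by hypothesis and the right term because the fibres of $g$ are at most one-dimensional; one then pushes forward the normalisation sequence $0\to W\mcO_{D,\mbQ}\to \nu_*W\mcO_{\tilde D,\mbQ}\to\mcG\to 0$ and uses the hypothesis on $D'=g(D)$ (via the fact that a universal homeomorphism induces an isomorphism on $W\mcO_{\mbQ}$, which is the role of \cite{GNT06}*{Lemma 2.5}) to conclude $g_*\mcG=0$ and hence $\mcG=0$. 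Note that even this final step, like your acknowledged ``bootstrapping'' obstacle in part (2), is of the form ``$Rg_*$ of a sheaf supported on a possibly $g$-contracted curve vanishes, hence the sheaf vanishes,'' which is false for general abelian sheaves and is exactly where the Frobenius-module structure of these $W\mcO_{\mbQ}$-sheaves must be used; you correctly locate this difficulty but do not resolve it, so as a standalone argument part (2) is incomplete (deferring to \cite{GNT06}*{Proposition 3.12} is, of course, what the paper itself does). Finally, your treatment of a component $D\subseteq\lfloor\Delta\rfloor$ contracted by $g$ to a curve is not an argument: such a $D$ does not survive in $\lfloor g_*\Delta\rfloor$, so the hypothesis gives no information about its image, and ``combine the dlt structure with part (2)'' does not by itself produce the normality of $D$ up to a universal homeomorphism.
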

\begin{proof}
This follows from \cite[Proposition 3.8]{GNT06}, \cite[Proposition 3.12]{GNT06}, and \cite[Lemma 2.5]{GNT06}.
\end{proof}
Note that if $R^1g_*\mcO_Y = 0$, then $R^1g_*(W\mcO_{Y,\mbQ}) = 0$ (\cite[Lemma 2.19 and Lemma 2.21]{GNT06}).

\section{The proof of Theorem \ref{thm:main}}
In this section we prove Theorem \ref{thm:main}. We begin with the following observation.

\begin{lemma} \label{lemma:extremal_rays} Let $(Y,\Delta)$ be a $\mbQ$-factorial  dlt pair defined over a field $k$. %{  
Assume that there exists a commutative diagram of projective birational morphisms
\begin{center}
\begin{tikzcd}
Y \arrow{d}{\pi} \arrow{r}{g} & Z \arrow{ld}  \\
X &
\end{tikzcd}
\end{center}
such that $X$ is a normal $\mbQ$-factorial variety, $\Exc(\pi) \subseteq \lfloor \Delta \rfloor$, and $g$ is a small birational morphism. Let $R$ be a $g$-exceptional curve. Then there exists an irreducible effective $\pi$-exceptional divisor $E'$ on $Y$ satisfying $R \cdot E' > 0$.
\end{lemma}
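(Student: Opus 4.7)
The plan is to produce an effective $\pi$-exceptional $\mbQ$-Cartier divisor on $Y$ whose intersection with $R$ is strictly positive; then any irreducible component realising this positivity will be the required $E'$. I will build this divisor as a sum $H' + tF$ of two pieces: a (possibly non-effective) $\pi$-exceptional divisor $H'$ with $R\cdot H' > 0$, and a large multiple of an effective $\pi$-exceptional divisor $F$ whose support is \emph{every} $\pi$-exceptional prime and which satisfies $R\cdot F = 0$.

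For the first piece, since $\pi$ is projective I pick a $\pi$-ample $\mbQ$-Cartier divisor $H$ on $Y$. Using $\mbQ$-factoriality of $X$, the pushforward $\pi_{*}H$ is $\mbQ$-Cartier, so $H'\defeq H - \pi^{*}\pi_{*}H$ is a $\pi$-exceptional $\mbQ$-Cartier divisor on $Y$. Because $\pi=h\circ g$ and $R$ is $g$-contracted, the curve $R$ is $\pi$-contracted, hence $R\cdot H' = R\cdot H > 0$. This gives positivity but not effectivity.

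For the second piece, I would apply the same trick on $Z$. Choose an $h$-ample $\mbQ$-Cartier divisor $A_Z$ on $Z$ (possible since $h$ is projective) and form $A_Z'\defeq A_Z - h^{*}h_{*}A_Z$, which is $h$-exceptional, $\mbQ$-Cartier, and still $h$-ample. The Negativity Lemma applied to $A_Z'$ on the normal variety $Z$ gives $-A_Z'\geq 0$. Moreover, since $-A_Z'$ is $h$-anti-ample and $h$-exceptional, the fibre part of the Negativity Lemma forces every full fibre of $h$ meeting $\Supp(-A_Z')$ to lie in $\Supp(-A_Z')$; applied to a general fibre over $h(D)$ for each $h$-exceptional prime $D \subseteq Z$ this shows $\Supp(-A_Z')$ contains \emph{every} $h$-exceptional prime. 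Because $g$ is small, it is an isomorphism in codimension one and establishes a bijection between the $\pi$-exceptional prime divisors on $Y$ and the $h$-exceptional prime divisors on $Z$ (each $\pi$-exceptional prime $E$ on $Y$ maps to a divisor under $g$, which then must be $h$-exceptional since $\pi(E)=h(g(E))$ has codimension $\geq 2$; and conversely). Pulling back therefore yields
\[
F \defeq g^{*}(-A_Z') = \sum_{k} c_k E_k,
\]
with $c_k > 0$ for every $\pi$-exceptional prime $E_k$ on $Y$. By the projection formula, $R\cdot F = g_{*}R\cdot(-A_Z')= 0$.

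To conclude, write $H' = \sum a_i E_i$ with $a_i \in \mbQ$. For $t \gg 0$ every coefficient $a_i + tc_i$ is strictly positive, so $H' + tF$ is an effective $\pi$-exceptional $\mbQ$-divisor on $Y$, while
\[
(H'+tF)\cdot R \;=\; H'\cdot R + t\,(F\cdot R) \;=\; H'\cdot R \;>\; 0.
\]
Hence some prime summand $E' = E_i$ must satisfy $E'\cdot R > 0$; otherwise every term in $\sum(a_i+tc_i)(E_i\cdot R)$ would be $\leq 0$, contradicting positivity of the sum. The main obstacle in this strategy is the second paragraph above: carefully extracting from the Negativity Lemma an effective $h$-exceptional divisor on $Z$ whose support covers the \emph{entire} $h$-exceptional locus, and then transferring it faithfully to $Y$ using the smallness of $g$ so that no $\pi$-exceptional prime gets dropped from the support.
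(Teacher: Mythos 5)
Your proof is correct, and it takes a genuinely different route from the paper's. The paper argues by contradiction: assuming $R\cdot E_i\leq 0$ for every $\pi$-exceptional prime $E_i$, it first uses the standard fact that over the $\mbQ$-factorial base $X$ there is an effective $\pi$-exceptional $\pi$-antiample divisor to find $E_1$ with $R\cdot E_1<0$ (hence $R\subseteq E_1$); it then sets $L\defeq g^*H$ for $H$ very ample on $Z$, writes $L\sim_{\pi,\mbQ}-\sum_i b_iE_i$ with $b_i\geq 0$ by push--pull and negativity, and produces a $\pi$-relative curve $C\subseteq E_1$ not contained in the other $E_i$ with $L\cdot C>0$ (this is where smallness of $g$ enters, via $g_*E_i\neq 0$) to force $b_1>0$, so that $0=L\cdot R\geq -b_1E_1\cdot R>0$ gives the contradiction. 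You instead argue directly, manufacturing an effective divisor $H'+tF$ supported on \emph{all} $\pi$-exceptional primes with positive coefficients and satisfying $(H'+tF)\cdot R=H\cdot R>0$, so that some component is positive on $R$. Both proofs pivot on the same two mechanisms --- the push--pull correction $D\mapsto D-\pi^*\pi_*D$ available because $X$ is $\mbQ$-factorial, and the numerical triviality on $R$ of anything pulled back from $Z$ --- but you apply the pullback-from-$Z$ trick to an $h$-ample divisor to obtain the ``full support, zero on $R$'' divisor $F$, whereas the paper applies it to $g^*H$ itself and finds the contradiction via the auxiliary curve $C$. Your version is constructive and avoids choosing $C$; the paper's is shorter once $C$ is found. (The paper's one-line invocation of an effective exceptional antiample divisor is exactly your $-A_Z'$ computation, performed on $Y$ rather than on $Z$.)

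One step is stated too loosely, as you yourself suspected: the containment dichotomy of the negativity lemma (each fibre of $h$ is either contained in or disjoint from $\Supp(-A_Z')$) only yields $D\subseteq\Supp(-A_Z')$ once you know that the fibre over a general point of $h(D)$ actually \emph{meets} $\Supp(-A_Z')$ --- which is essentially what you are trying to prove, so as written the application is circular. The repair is immediate from $h$-ampleness and makes the dichotomy unnecessary: any $h$-contracted curve $C\subseteq D$ satisfies $(-A_Z')\cdot C<0$, which is impossible unless $C\subseteq\Supp(-A_Z')$, since an effective divisor meets a curve not contained in its support nonnegatively; as such curves sweep out $D$, every $h$-exceptional prime lies in $\Supp(-A_Z')$. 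With this one-line patch your argument is complete; the remaining ingredients (the bijection between $\pi$-exceptional primes on $Y$ and $h$-exceptional primes on $Z$ via the small map $g$, the identity $g_*g^*=\mathrm{id}$ on divisors, and the projection-formula computations $F\cdot R=0$ and $H'\cdot R=H\cdot R$) are all justified correctly.
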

\begin{proof}
Write $\Delta = E + B$, where $B\geq 0$ and  $E = \sum _{i=1}^kE_i$ is the union of all irreducible $\pi$-exceptional divisors $E_i$. Assume by contradiction, that the statement of the lemma is false, that is  $R \cdot E_i \leq 0$ for every $i \geq 1$. Since $X$ is $\mbQ$-factorial, there exists an effective $\pi$-exceptional $\pi$-antiample divisor on $Y$, and hence $R$ is negative on some irreducible exceptional divisor. {Thus, without loss of generality,} we may assume $R \cdot E_1 < 0$, and so $R \subseteq E_1$.

{Set $L \defeq g^*H$ for a very ample divisor $H$ on $Z$ such that { $\Supp L$} and $E$ have no common components.} Since $g_*E_i\ne 0$ for $i=1,\ldots, k$, there exists a  $\pi$-relative curve $C$ on $E_1$, not contained in $\bigcup_{i \geq 2} E_i$, such that $L \cdot C > 0$. %By the existence of the effective $\pi$-exceptional $\pi$-antiample divisor, we get that $C \cdot E_1 <0$.

Given that $X$ is $\mbQ$-factorial, by pushing forward $L$ to $X$ and pulling it back, we obtain
\[
L \sim_{\pi, \mbQ} \sum_{i\geq 1} -b_i E_i
\]
for some $b_i \geq 0$. Since $L \cdot C >0$ and $E_i \cdot C \geq 0$ for $i \geq 2$, we get $E_1 \cdot C < 0$ and $b_1 > 0$. {Therefore},
\[
L \cdot R = -b_1E_1 \cdot R - \sum_{i\geq 2} b_iE_i \cdot R \geq -b_1E_1 \cdot R > 0,
\] 
which is a contradiction since $L\cdot R=0$ by our choice of $R$ and $L$; here we used the ad absurdum assumption that $R \cdot E_i \leq 0$ for all $i\geq 1$.
\end{proof}

{
\begin{remark} \label{remark:contractions_over_algebraic_spaces}
The above lemma holds for $X$ being a normal integral separated $\mbQ$-factorial algebraic space of finite type over $k$ (see \stacksproj{083Z}). Here, we call an algebraic space $\mathbb{Q}$-factorial if every Weil divisor is Cartier (that is Cartier up to a surjective \'etale cover by a scheme, cf.\ \stacksproj{083C}). The key case we will use in this article is when $X$ admits a {projective} birational morphism $\phi \colon X' \to X$ such that $X'$ is a normal $\mbQ$-factorial variety, $\rho(X'/X)=1$, and $\Exc(\phi)$ is a divisor. To verify that $X$ is $\mbQ$-factorial, we can replace it by an \'etale cover and assume that it is a variety, in which case the $\mbQ$-factoriality follows by a standard argument.
\end{remark}}
\subsection{Flips}
In this subsection, we tackle the existence of flips. {The following lemma is a key ingredient in the proof of Proposition \ref{prop:flips_exist}.}

\begin{lemma} \label{lemma:rel_surfaces} Let $(S,C + B)$ be a two-dimensional plt pair defined over an infinite perfect field $k$ of characteristic $p>0$, where $C$ is an irreducible curve, and let $f \colon S \to T$ be a projective birational morphism onto a surface germ $(T,0)$ such that $-(K_S+C+B)$ {and $C$ are $f$-nef.} Then $(S,C+B)$ is relatively purely F-regular over a neighbourhood of $f(C) \subseteq T$.
\end{lemma}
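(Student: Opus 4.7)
My plan is to reduce to the hypothesis of the relative inversion of F-adjunction (Lemma~\ref{lemma:relative-inversion-of-adjunction}) and then apply it. I first observe that $C$ cannot be $f$-exceptional, since the negativity lemma on the normal surface $S$ would then force $C \cdot C < 0$, contradicting the $f$-nefness of $C$; therefore $\Gamma \coloneq f(C) \subseteq T$ is a curve and the induced map $\bar C \to \Gamma$ from the normalisation of $C$ is finite. Next, plt-ness of $(S, C + B)$ passes through adjunction and makes $(\bar C, B_{\bar C})$, defined by $K_{\bar C} + B_{\bar C} = (K_S + C + B)|_{\bar C}$, a klt pair on the smooth curve $\bar C$; equivalently, every coefficient of $B_{\bar C}$ lies in $[0,1)$. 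Such a one-dimensional pair is strongly F-regular, and finiteness of $\bar C \to \Gamma$ upgrades this to global F-regularity of $(\bar C, B_{\bar C})$ over $\Gamma$, supplying the adjunction input demanded by Lemma~\ref{lemma:relative-inversion-of-adjunction}.

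The main obstacle is to promote the $f$-nefness of $-(K_S + C + B)$ to $f$-ampleness. If $-(K_S + C + B)$ is already $f$-ample, Lemma~\ref{lemma:relative-inversion-of-adjunction} applies directly. Otherwise, the two-dimensional cone and contraction theorems over the germ $T$ (see \cite{tanaka12,tanaka16_excellent}) produce a $(K_S + C + B)$-trivial extremal ray $R \in \NE(S/T)$; since $C$ is not $f$-exceptional, $R$ is not the class of $C$, and the contraction of $R$ is a crepant birational morphism $g \colon S \to S^{(1)}$ over $T$ onto a normal $\mbQ$-factorial surface. The pair $(S^{(1)}, g_* C + g_* B)$ inherits plt-ness from crepancy (the log discrepancy of the contracted curve becomes $1 > 0$), while the projection formula shows that both $g_* C$ and $-(K_{S^{(1)}} + g_* C + g_* B)$ remain $(f^{(1)})$-nef. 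Iterating lowers the relative Picard number at each step and so terminates after finitely many contractions in a crepant model $(S', C' + B')$ over $T$ on which $-(K_{S'} + C' + B')$ is $f'$-ample; Lemma~\ref{lemma:relative-inversion-of-adjunction} then gives pure global F-regularity of $(S', C' + B')$ over a neighbourhood of $\Gamma$.

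Finally, I would transfer pure global F-regularity back along the composite crepant birational morphism $S \to S'$. Using $g_* \mcO_S = \mcO_{S'}$, the crepancy identity $K_S + C + B = g^*(K_{S'} + C' + B')$, and relative Kawamata--Viehweg-type vanishing for birational morphisms of plt surfaces (available in any positive characteristic in dimension two), the splittings witnessing pure F-regularity of $(S', C' + B')$ pull back to analogous splittings on $S$ for every divisor $D$ meeting $C = \lfloor C + B \rfloor$ properly, which yields pure global F-regularity of $(S, C + B)$ over a neighbourhood of $\Gamma$. The $f$-nefness of $C$ plays a twofold role in this argument: it rules out $C$ being $f$-exceptional at the outset, and it is preserved along with plt at each intermediate crepant contraction, so that the inductive reduction of the preceding paragraph goes through.
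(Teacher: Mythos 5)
Your route is genuinely different from the paper's and is workable in outline, but two of your steps are not supported by the tools you cite for them. First, the contraction of a $(K_S+C+B)$-\emph{trivial} extremal ray is not produced by the cone and contraction theorems, which only contract $(K_S+C+B)$-\emph{negative} rays; to contract the trivial part of the exceptional locus you need either Artin-type contractibility of negative-definite curve configurations on excellent surfaces or, closer to the paper's spirit, the relative base point free theorem applied to the $f$-nef and $f$-big divisor $-(K_S+C+B)$. Second, the back-transfer of pure global F-regularity along the crepant morphism $g\colon S\to S'$ is not a consequence of relative Kawamata--Viehweg vanishing; the correct mechanism is the crepant correspondence of $p^{-e}$-linear maps, i.e.\ \cite[Proposition 2.11]{hx13}, and one must also check that the resulting splittings are local over $T$ rather than merely over $S'$.

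For comparison, the paper avoids both issues by a different reduction: using the relative base point free theorem it chooses a general effective $D\sim_{\mbQ}-2(K_S+C+B)$ with $(S,C+B+D)$ log canonical (this is where the hypothesis that $k$ is infinite enters, via \cite[Theorem 1]{tanaka_relative17} and an lc compactification), replaces $B$ by $B+\tfrac{1}{2}D$ so that $K_S+C+B\sim_{f,\mbQ}0$, and then pushes the \emph{entire} pair down to $T$: the pair $(T,E+B_T)$ with $E=f_*C$ is plt, its one-dimensional adjunction $(E,B_E)$ is strongly F-regular, F-adjunction gives pure F-regularity of $(T,E+B_T)$, and \cite[Proposition 2.11]{hx13} lifts this back to relative pure F-regularity of $(S,C+B)$ over $T$. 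Your approach buys independence from the complement/general-member input (and hence from the infinite-field hypothesis), at the cost of needing the two repairs above; the paper's approach localises the whole problem to a one-dimensional F-adjunction computation on $T$.
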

\begin{proof} By \cite[Remark 2.6]{HW17} and Stein factorisation, we can assume that $T$ is normal. {By the relative base point free theorem, $-(K_S+C+B)$ is semi-ample over $T$, and hence there exists an effective $\mbQ$-divisor $D \sim_{\mbQ} -2(K_S+C+B)$ such that $(S,C+B+D)$ is log canonical (this follows from \cite[Theorem 1]{tanaka_relative17} after having taken an lc compactification as in the proof of \cite[Proposition 2.10]{hnt}). Replacing $B$ by $B+\frac{1}{2}D$, we can suppose that $-(K_S+C+B) \sim_{f, \mbQ} 0$ and $(S,C+B)$ is plt.} 

Let $E \defeq f_*C$ and $B_T \defeq f_*B$. Since $C$ is irreducible and $f$-nef, one sees that $E\ne 0$ is an irreducible curve. {Moreover, we have that $K_S+C+B = f^*(K_T + E + B_T)$,} and so $(T, E+B_T)$ is plt. In particular, $E$ is normal, and $(E , B_{E})$ is klt where $K_{E}+B_{E}=(K_T+E+B_T)|_{E}$. Since $\dim E=1$, $(E , B_{E})$ is log smooth and so it is strongly F-regular. {Consequently,} F-adjunction shows that  $(T,E+B_T)$ is purely F-regular (see for instance Lemma \ref{lemma:relative-inversion-of-adjunction}), and \cite[Proposition 2.11]{hx13} implies that $K_S+C+B$ is relatively purely F-regular.
\end{proof}

The following proposition in conjunction with Lemma \ref{lemma:extremal_rays} gives the existence of flips as in Theorem \ref{thm:main}.
\begin{proposition} \label{prop:flips_exist} Let $(X,\Delta)$ be a three-dimensional $\mbQ$-factorial dlt pair defined over a perfect field $k$ of characteristic $p>0$. Let $\phi \colon X \to Z$ be a flipping contraction of a $(K_X+\Delta)$-negative extremal ray $R$. Suppose that there exist irreducible divisors $E, E' \subseteq \lfloor \Delta \rfloor$ such that $R \cdot E < 0$ and $R \cdot  E' > 0$. Then the flip $(X^+, \Delta^+)$ of $f$ exists. 

Moreover, if $(X,\Delta)$ has normal divisorial centres up to a universal homeomorphism, then so does $(X^+, \Delta^+)$.
\end{proposition}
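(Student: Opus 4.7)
The strategy is to bring the flipping contraction $\phi$ into the framework of the pl-flip construction of Hacon-Xu \cite{hx13}: if $(X,S+B)$ is a $\mbQ$-factorial plt pair with $S$ prime, $-S$ and $-(K_X+S+B)$ are $\phi$-ample, and $(X,S+B)$ is purely globally F-regular over $Z$ near the flipping points, then the flip exists, and the proof of this implication works in all characteristics. For $p>5$ the F-regularity is automatic, but in low characteristic it must be supplied by hand; I will extract it from Lemma \ref{lemma:rel_surfaces}, exploiting the auxiliary divisor $E'$. Since $R\cdot E<0$, every flipping curve $C_i$ lies in $E$; since $R\cdot E'>0$, each $C_i$ meets $E'$ and in particular $E'\neq E$. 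Lowering the coefficients of all components of $\lfloor\Delta\rfloor\setminus\{E,E'\}$ by a small $\epsilon>0$ preserves $\phi$, the ray $R$, and the signs $R\cdot E<0$, $R\cdot E'>0$, $R\cdot(K_X+\Delta)<0$, so I may assume $\lfloor\Delta\rfloor=E+E'$.

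Let $\nu\colon\tilde E\to E$ be the normalisation and let $f\colon\tilde E\to T$ be the Stein factorisation of $\phi|_E\circ\nu$; this is a projective birational morphism between normal surfaces whose exceptional locus is the union of the preimages $\tilde C_i$ of the flipping curves. Adjunction writes $(K_X+\Delta)|_{\tilde E}=K_{\tilde E}+C+B$, where $C$ is supported on $\nu^{-1}(E\cap E')$ with coefficient one (because $E'\subseteq\lfloor\Delta\rfloor$) and $\lfloor B\rfloor=0$ by the reduction above. By the projection formula, $C\cdot\tilde C_i=E'\cdot C_i>0$ and $-(K_{\tilde E}+C+B)\cdot\tilde C_i=-(K_X+\Delta)\cdot C_i>0$, so both $C$ and $-(K_{\tilde E}+C+B)$ are $f$-nef. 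After localising $T$ at a flipping point and (if necessary) restricting to a neighbourhood of an irreducible component of $C$ through a flipping curve so that the boundary becomes irreducible, and if $k$ is finite replacing $k$ by an infinite perfect algebraic extension, Lemma \ref{lemma:rel_surfaces} applies and yields that $(\tilde E,C+B)$ is relatively purely F-regular over a neighbourhood of each flipping point of $T$; consequently $(\tilde E,(1-\delta)C+B)$ is relatively globally F-regular for every $0<\delta<1$. I then choose $\delta\ll 1$ and set $B_X=(1-\delta)E'+(1-\epsilon)(\Delta-E-E')$: the pair $(X,E+B_X)$ is $\mbQ$-factorial plt, $-(K_X+E+B_X)$ is $\phi$-ample on $R$, and adjunction along $E$ recovers the globally F-regular klt pair above. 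Lemma \ref{lemma:relative-inversion-of-adjunction} therefore gives that $(X,E+B_X)$ is purely globally F-regular in a neighbourhood of $\phi(E)\subseteq Z$ at each flipping point, and the pl-flip construction of \cite{hx13} applied to $(X,E+B_X)$ produces the flip of $(X,E+B_X)$; since the flipping contraction and ray coincide, this is also the flip $(X^+,\Delta^+)$ of $(X,\Delta)$.

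For the moreover clause I would follow \cite{GNT06}: factor $X\dashrightarrow X^+$ through a common $\mbQ$-factorial dlt model, $g\colon Y\to X$ and $g^+\colon Y\to X^+$, both with at most one-dimensional fibres, and first propagate normality-up-to-universal-homeomorphism from $(X,\Delta)$ up to $(Y,\Delta_Y)$ by Proposition \ref{prop:ascend_witt_rationality} (the $R^1g_*W\mcO_{Y,\mbQ}$-vanishing hypothesis holds thanks to the pure F-regularity established along the flipping locus, via the remark following Proposition \ref{prop:ascend_witt_rationality}), and then descend from $(Y,\Delta_Y)$ to $(X^+,\Delta^+)$ by Proposition \ref{prop:GNT} applied to the pl-contractions factoring $g^+$. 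The main obstacle in this entire argument is the F-regularity step on $\tilde E$: the standard $p>5$ proof of \cite{hx13} breaks down, but the positivity $C\cdot\tilde C_i>0$ on the $f$-exceptional curves, which is precisely what $R\cdot E'>0$ guarantees, places the restricted pair into the hypotheses of Lemma \ref{lemma:rel_surfaces} and thereby yields pure F-regularity in every characteristic.
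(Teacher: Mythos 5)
Your construction of the flip itself is essentially the paper's own argument: reduce the boundary to a plt-type pair with $E$ the only coefficient-one component relevant to the adjunction, restrict to the normalisation $\tilde E$, use $R\cdot E'>0$ to verify that the boundary curve coming from $E'$ is nef over the Stein factorisation so that Lemma \ref{lemma:rel_surfaces} applies, deduce relative pure F-regularity of the threefold pair by Lemma \ref{lemma:relative-inversion-of-adjunction}, and feed this into the pl-flip machinery of \cite[Theorem 4.12]{hx13}. The paper packages the coefficient-lowering slightly differently (it sets $\Delta^{\mathrm{plt}}=E+(1-\epsilon)(E'+D)$ and applies Lemma \ref{lemma:rel_surfaces} to the plt pair obtained by adding back $\epsilon E'$, rather than keeping $E'$ at coefficient one and lowering it afterwards), but this is a cosmetic difference.

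The ``moreover'' clause is where your argument has a genuine gap. You propose to factor $X\dashrightarrow X^+$ through a common $\mbQ$-factorial dlt model $Y$ with \emph{both} $Y\to X$ and $Y\to X^+$ having fibres of dimension at most one; since $X$ and $X^+$ are $\mbQ$-factorial and not isomorphic, any such $Y$ must extract a divisor lying over the flipping curve, and there is no reason its fibres over special points of that curve are one-dimensional, so the hypotheses of Proposition \ref{prop:ascend_witt_rationality} are not verified for the morphisms you use. Moreover, the $R^1$-vanishing you need is not a consequence of the pure F-regularity of $X$ over $Z$ applied to some auxiliary $g\colon Y\to X$; what is actually needed is the vanishing $R^1\phi^+_*\mcO_{X^+}=0$ for the flipped contraction $\phi^+\colon X^+\to Z$ itself, which is small with one-dimensional fibres. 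The paper obtains this by first transferring relative F-regularity from $X/Z$ to $X^+/Z$ via \cite[Corollary 6.4]{schwedesmith10} (a step absent from your write-up), then splitting $R^1\phi^+_*\mcO_{X^+}$ into $F^e_*R^1\phi^+_*\mcO_{X^+}(A)=0$ for a suitable ample $A$; combined with Proposition \ref{prop:GNT} applied to $\phi$ (normality of the components of $\phi_*\lfloor\Delta\rfloor$) and Proposition \ref{prop:ascend_witt_rationality} applied to $\phi^+$, this yields the normality of divisorial centres of $(X^+,\Delta^+)$ up to a universal homeomorphism. Your argument should be repaired along these lines.
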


\begin{remark} Note that if $(X,S + S' + B)$ is a three-dimensional dlt pair such that $S$, $S'$ are distinct irreducible divisors and $\lfloor B \rfloor = 0$, then $(\tilde S, \Delta _{\tilde S})$ is plt, where $\tilde S$ is the normalisation of $S$ and  $K_{\tilde S} + \Delta _{\tilde S} = (K_X +S+S'+B)|_{\tilde S}$. This can be verified on a log resolution of $(X,S+S'+B)$.

% as in \cite[Theorem 2.4.4]{km98}
\end{remark}
\begin{proof} We work over a neighborhood of the closed point $P=\phi (R)\subset Z$ and hence we will frequently replace $Z$ by an appropriate neighborhood of $P\in Z$. By base change to its algebraic closure, we can assume that $k$ is algebraically closed. Except for $\mbQ$-factoriality, all the assumptions are preserved under this base change (cf.\ \cite[Remark 2.7(1)]{GNT06}). Moreover, it is enough to construct a flip after this base change by \cite[Remark 2.7(2)]{GNT06}. Although $X$ may not be $\mbQ$-factorial, the irreducible components of $\Delta$ are $\mbQ$-Cartier and this is all that we need in the following proof.

Write $\Delta = E + E' + D$, where $D \geq 0$, and set 
\[
\Delta^{\mathrm{plt}} \defeq E + (1-\epsilon)(E'+D)
\]
for some $\epsilon > 0$ sufficiently small so that $(K_X + \Delta^{\mathrm{plt}}) \cdot R < 0$ { and $(K_X + \Delta^{\mathrm{plt}} + \epsilon E') \cdot R < 0$.} 

We aim to show that $(X, \Delta^{\mathrm{plt}})$ is relatively purely F-regular over $Z$. To this end, write 
\begin{align*}
	K_{\tilde E} + \Delta_{\tilde E} &= (K_X + \Delta^{\mathrm {plt}})|_{\tilde E}, \text{ and }\\
	K_{\tilde E} + \Delta'_{\tilde E} &= (K_X + \Delta^{\mathrm {plt}} + \epsilon E')|_{\tilde E},
\end{align*}
where $g \colon \tilde E \to E$ is the normalisation of $E$. 
Since $(X, \Delta)$ is dlt, the above remark shows that $(\tilde E, \Delta'_{\tilde E})$ is plt, and so Lemma \ref{lemma:rel_surfaces} implies that it is, in fact, relatively purely F-regular over a neighborhood of $P \in \phi(E)$. Hence we may assume that $(\tilde E, \Delta_{\tilde E})$ is relatively F-regular.

By inversion of F-adjunction (see Lemma \ref{lemma:relative-inversion-of-adjunction}), this implies that $(X, \Delta^{\mathrm{plt}})$ is relatively purely F-regular over $Z$. In particular, the proof of the existence of the flip of $(X, \Delta^{\mathrm{plt}})$ from \cite[Thereom 4.12]{hx13} holds without any change (see Remark \ref{remark:hx}). Since such a flip is also the flip of $(X,\Delta)$, we obtain $(X^+, \Delta^+)$ sitting inside the following diagram
\begin{center}
\begin{tikzcd}
X \arrow[swap]{rd}{\phi} \arrow[dashed]{rr}{\psi} & & X^+ \arrow{ld}{\phi^+} \\
& Z, &
\end{tikzcd}
\end{center}
where $\Delta^+ \defeq \psi_* \Delta$. 

In order to show the normality of divisorial centres of $\lfloor \Delta^+ \rfloor$ up to a universal homeomorphism, we proceed as follows. First, we have that irreducible components of $\phi_*\lfloor \Delta \rfloor$ are normal up to a universal homeomorphism by Proposition \ref{prop:GNT}. %as $(X,\Delta')$ is relatively purely F-regular, and so $(X, (1-\epsilon)\Delta)$ is relatively F-regular for any $0 < \epsilon < 1$. 

By \cite[Corollary 6.4]{schwedesmith10}, since $X$ is {relatively} F-regular, we have that $X^+$ is relatively F-regular as well. Pick an ample divisor $A$ such that $R^1\phi^+_* \mcO_{X^+}(A) = 0$. Since $F^e \colon \mcO_{X^+} \to F^e_* \mcO_{X^+}(A)$ splits locally over $Z$ for some $e>0$, we get that
\[
R^1\phi^+_*\mcO_{X^+} \subseteq F^e_* R^1\phi^+_*\mcO_{X^+}(A) = 0.
\]
In particular, $R^1\phi^+_*(W\mcO_{X^+, \mbQ})=0$ by \cite[Lemma 2.19 and Lemma 2.21]{GNT06}. By Proposition \ref{prop:ascend_witt_rationality}, this implies that $(X^+,\Delta^+)$ has normal divisorial centres up to a universal homeomorphism.
\end{proof} 

{
\begin{remark} \label{remark:hx} In the proof of the above theorem, we used the fact that if $\phi \colon X \to Z$ is a $(K_X+S+B)$-flipping contraction of an extremal curve $R$ for a three-dimensional relatively purely F-regular pair $(X,S+B)$ defined over {a perfect} field of characteristic $p>0$ with $S$ being an irreducible divisor such that $R \cdot S < 0$, then the flip of $\phi$ exists. This has been implicitly proven in \cite[Theorem 4.12]{hx13}. 

{The statement of \cite[Theorem 4.12]{hx13} assumes that the coefficients of $\Delta^{\mathrm{plt}}$ are standard, but this is only used to show that $(X, S+B)$ is relatively purely F-regular, which we assume apriori to be true. Moreover, the results of \cite{hx13} are stated over an algebraically closed field, but it is enough to show the existence of flips after base-changing to the algebraic closure $\overline{k}$ of $k$ (see \cite[Remark 2.7(2)]{GNT06}). Here note that the $\mbQ$-factoriality of $X$ need not be preserved, but fortunately $\mbQ$-factoriality is not needed in the proof of \cite[Theorem 4.12]{hx13}.}

For the convenience of the reader, we recall the general strategy of the proof.

First, by \cite[Proposition 4.1]{hx13} we have that $S$ is normal. Now, fix an effective Cartier divisor $Q \sim k(K_X+S+B)$ on $X$ for some $k>0$ sufficiently divisible such that the support of $Q$ does not contain $S$, and define the following b-divisors
\[
\mathbf{N}_i \defeq \mathbf{Mob}(iQ), \quad \mathbf{M}_i \defeq \mathbf{N}_i|_S, \ \text{ and } \ \mathbf{D}_i \defeq \frac{1}{i}\mathbf{M}_i.
\]
Write $K_S + B_S = (K_X+S+B)|_S$, and let $\bar S$ be the terminalisation of $(S,B_S)$. Then $\mathbf{M}_i$ descends to $\bar S$ by \cite[Lemma 4.4]{hx13}.

Following the strategy depicted in \cite{cortibook}, it is enough to show that $\mathbf{D}_i$ stabilise for $i\gg 0$. To this end, we note that the $\mathbb{R}$-divisor $\mathbf{D} \defeq \lim \mathbf{D}_i$ is semiample on $\bar S$ by \cite[Lemma 4.8]{hx13}. Let $a \colon \bar S \to S^+$ be the induced fibration as in \cite[Discussion after Lemma 4.8]{hx13}. By \cite[Corollary 4.11]{hx13}, we have that $\mathbf{D}_{\bar S}$ is rational and $a_*\mathbf{D}_{\bar S} = a_*\mathbf{D}_{j, \bar S}$ for divisible enough $j\gg 0$.

Let $g \colon Y \to X$ be an appropriately chosen resolution of singularities such that $ \mathbf{N}_{i,Y}$ is free. We replace $Y$ upon changing $i$. Set $L_{i,j}  \defeq \lceil \frac{j}{i} \mathbf{N}_{i,Y} + \mathbf{A}_Y \rceil$ for $i,j>0$ and $\mathbf{A}$ being the discrepancy b-divisor for $(X,S+B)$. Explicitly:
\[
K_Y + S' = f^*(K_X + S + B) + \mathbf{A}_Y,
\]
where $S'$ is the strict transform of $S$. Since $(S,B_S)$ is relatively purely F-regular, the proof of \cite[Lemma 4.13]{hx13} holds without any change, and so
\begin{equation} \label{eq:frob_surj}
S^0(S', \sigma(S', \psi_{S'}) \otimes \mcO_{S'}(L_{i,j}|_{S'})) = H^0(S^+, \mcO_{S^+}(j\mathbf{D}_{S^+}))
\end{equation}
for $j$ replaced by some multiple and $i \gg 0$ divisible by $j$, where the left hand side is the subset of sections in $H^0(S', \mcO_{S'}(L_{i,j}|_{S'}))$ which are stable under the Frobenius trace map of $(S', \psi_{S'})$ (see \cite[Section 2.3]{hx13} for the precise definition). Here $\psi_{S'} \defeq (\psi - S')|_{S'}$ and $\psi$ is a small perturbation of $\{-\frac{j}{i}\mathbf{N}_{i,Y} - \mathbf{A}_Y\}+S'$ (see \cite[Lemma 4.6]{hx13}). Moreover, the natural morphism
\begin{align*}
S^0(Y, \sigma(Y,\psi)\otimes \mcO_Y(L_{i,j})) \to \ & S^0(S', \sigma(S', \psi_{S'}) \otimes \mcO_{S'}(L_{i,j}|_{S'})) \\ &= H^0(S^+, \mcO_{S^+}(j\mathbf{D}_{S^+}))
\end{align*}
is surjective (see \cite[Lemma 4.7]{hx13}). Since
\[
|L_{i,j}| \subseteq |jk(K_Y + S' + B_Y)| + \lceil A_Y \rceil,
\]
where $B_Y = (-A_Y)_{\geq 0}$, this allows for lifting sections and showing the stabilisation of $\mathbf{D}_i$ (see the end of the proof of \cite[Theorem 4.12]{hx13} for details).  

To sum up, the relative pure F-regularity of $(X,S+B)$ is used twice: to show the normality of $S$, and, more importantly, to establish the {identity} (\ref{eq:frob_surj}) which is fundamental in lifting sections. 
\end{remark}}

\subsection{Proof of Theorem \ref{thm:main} and Theorem \ref{thm:normality}}

For inductive reasons and the sake of clarity, we tackle Theorem \ref{thm:main}, Theorem \ref{thm:normality}, and  Corollary \ref{cor:wittrationality} simultaneously. Using the following proposition we will show that every  three-dimensional dlt pair has normal divisorial centres up to a universal homeomorphism, which in turn will conclude the proof of Theorem \ref{thm:main}.{ This approach follows, mutatis mutandis, the strategy laid down in \cite[Theorems 3.14, 3.15, 3.16]{GNT06}.}
\begin{proposition} \label{prop:main} Let $(Y,\Delta)$ be a $\mbQ$-factorial three-dimensional dlt pair defined over a perfect field of characteristic $p>0$ with normal divisorial centres up to a universal homeomorphism. Assume that there exists a projective morphism $\pi \colon Y \to X$ over a normal $\mbQ$-factorial variety $X$ such that $\Exc(\pi) \subseteq \lfloor \Delta \rfloor$. 

Then we can run a {$(K_Y+\Delta)$-MMP} over $X$ which terminates with a minimal model $(Z,\Delta_Z)$, whose divisorial centres are normal up to a universal homeomorphism. Moreover, if $Y$ has $W\mcO$-rational singularities, then so does $Z$.
\end{proposition}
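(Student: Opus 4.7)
The plan is to run the $(K_Y+\Delta)$-MMP over $X$ with scaling of an ample divisor, maintaining by induction on the number of steps that at each intermediate pair $(Y_i,\Delta_i)$ with $\pi_i\colon Y_i\to X$, all of the following persist: $\mbQ$-factoriality of $Y_i$, the dlt property of $(Y_i,\Delta_i)$, the containment $\Exc(\pi_i)\subseteq \lfloor\Delta_i\rfloor$, normality of divisorial centres up to universal homeomorphism, and (when applicable) $W\mcO$-rationality of $Y_i$.

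For a single step, suppose $K_{Y_i}+\Delta_i$ is not $\pi_i$-nef. First, by Theorem \ref{thm:cone} combined with the standard MMP-with-scaling setup, I would produce a $(K_{Y_i}+\Delta_i)$-negative extremal ray $R$ over $X$. Since $\pi_i$ is birational, $R$ is $\pi_i$-vertical and so $R\subseteq \Exc(\pi_i)\subseteq \lfloor\Delta_i\rfloor$. Next, because $X$ is $\mbQ$-factorial, I would start with any $\pi_i$-ample divisor $H$ and subtract $\pi_i^*\pi_{i*}H$ to obtain a $\pi_i$-exceptional, still $\pi_i$-ample divisor; by the negativity lemma such a divisor is anti-effective, so there exists an effective $\pi_i$-exceptional $F$ with $-F$ being $\pi_i$-ample. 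Intersecting with $R$ forces $F\cdot R<0$, hence $E\cdot R<0$ for some irreducible component $E$ of $F$, which lies in $\lfloor\Delta_i\rfloor$. Proposition \ref{prop:contractions} then yields the contraction $\phi_i\colon Y_i\to Y_i'$.

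In the divisorial case I would set $(Y_{i+1},\Delta_{i+1})\coloneq (Y_i',\phi_{i*}\Delta_i)$; $\mbQ$-factoriality, dlt, and the containment of exceptional loci are standard, while normality of divisorial centres up to universal homeomorphism together with $W\mcO$-rationality transfer through Proposition \ref{prop:GNT}. In the small case, Lemma \ref{lemma:extremal_rays} applied to $Y_i\to Y_i'\to X$ delivers an irreducible $\pi_i$-exceptional divisor $E'\subseteq \lfloor\Delta_i\rfloor$ with $R\cdot E'>0$; together with the $E$ from the previous step this triggers Proposition \ref{prop:flips_exist}, constructing the flip $(Y_{i+1},\Delta_{i+1})$ with the required normality property built in. For $W\mcO$-rationality, I would first push it down from $Y_i$ to $Y_i'$ via Proposition \ref{prop:GNT}, then lift it back up from $Y_i'$ to $Y_{i+1}$ along the flipped contraction $\phi_i^+\colon Y_{i+1}\to Y_i'$ using Proposition \ref{prop:ascend_witt_rationality}, which applies because the fibres of $\phi_i^+$ are at most one-dimensional.

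The main obstacle is termination. Divisorial contractions strictly decrease the relative Picard number $\rho(Y_i/X)$ and therefore occur only finitely many times; to rule out an infinite sequence of flips between consecutive divisorial steps, I would invoke a standard special termination argument. Because each flipping locus is $\pi_i$-vertical and hence contained in $\Exc(\pi_i)\subseteq \lfloor\Delta_i\rfloor$, while special termination---based on the two-dimensional MMP available in every positive characteristic by \cite{tanaka12}, and carried out exactly as in \cite{GNT06}---forces flipping loci eventually to avoid $\lfloor\Delta_i\rfloor$, the flip sequence must in fact be finite, giving the desired minimal model $(Z,\Delta_Z)$.
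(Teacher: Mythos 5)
Your proposal is correct and follows essentially the same route as the paper: cone theorem for the extremal ray, an effective relatively anti-ample exceptional divisor coming from the $\mbQ$-factoriality of $X$ to produce $E\subseteq\lfloor\Delta_i\rfloor$ with $E\cdot R<0$, Proposition \ref{prop:contractions} for the contraction, Lemma \ref{lemma:extremal_rays} plus Proposition \ref{prop:flips_exist} for flips, Propositions \ref{prop:GNT} and \ref{prop:ascend_witt_rationality} to propagate normality of divisorial centres and $W\mcO$-rationality, and special termination. The paper compresses all of this into a few lines; your write-up merely supplies the details it leaves implicit.
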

\begin{proof}
The cone theorem is valid by Theorem \ref{thm:cone}.  Since $X$ is $\mbQ$-factorial, there exists a relatively anti-ample effective exceptional divisor. In particular, there exists $S \subseteq \lfloor \Delta \rfloor$ such that $S \cdot R < 0$ for a given $(K_X+\Delta)$-extremal ray $R$. Proposition \ref{prop:contractions} implies the existence of the contraction $g \colon X \to Z$ of $R$. By Proposition \ref{prop:GNT}, the normality of divisorial centres up to a universal homeomorphism is preserved. By Lemma \ref{lemma:extremal_rays} and Proposition \ref{prop:flips_exist}, if $g$ is a flipping contraction, then the flip exists and the normality of divisorial centres up to a universal homeomorphism is preserved {under $g$}. The termination of the MMP with scaling holds by standard arguments (cf.\ \cite{hx13} and \cite{BW14}). Thus, a minimal model exists.

Each step of the MMP preserves the $W\mcO$-rationality of singularities by Proposition \ref{prop:GNT} and Proposition \ref{prop:ascend_witt_rationality}. Therefore, if $Y$ has $W\mcO$-rational singularities, then so does $Z$.
\end{proof}

In order, to show Theorem \ref{thm:main}, we need to prove Theorem \ref{thm:normality} first.
\begin{proof}[Proof of Theorem \ref{thm:normality}]
Let $\pi \colon Y \to X$ be a log resolution of $(X, S+B)$. By Proposition \ref{prop:main}, we can run a $(K_Y+\pi^{-1}_*(S+B) + \Exc(\pi))$-MMP over $X$. The corresponding minimal model must be $(X,S+B)$ itself. In fact the plt condition guarantees that all the components of $\Exc(\pi)$ are contracted and hence the minimal model is isomorphic to $X$ in codimension one. Since $X$ is $\mbQ$-factorial, the above minimal model is in fact isomorphic to $X$. By Proposition \ref{prop:main}, the normalisation of $S$ is a universal homeomorphism. 
\end{proof}

\begin{proof}[Proof of Theorem \ref{thm:main}]
By Theorem \ref{thm:normality} we have that the divisorial centres of $(Y,\Delta)$ are normal up to a universal homeomorphism. The theorem now follows from Proposition \ref{prop:main}.
\end{proof}

Corollaries \ref{cor:wittrationality}, \ref{cor:dlt_mod} and \ref{cor:plt} now follow.
\begin{proof}[Proof of Corollary \ref{cor:wittrationality}]
Let $\pi \colon Y \to X$ be a log resolution of $X$. By Proposition \ref{prop:main}, we can run a $(K_Y+\Exc(\pi))$-MMP over $X$, the minimal model of which must be $X$ itself, as it is klt and $\mbQ$-factorial. In particular, $X$ has $W\mcO$-rational singularities by Proposition \ref{prop:main}.
\end{proof}

\begin{proof}[Proof of Corollary \ref{cor:dlt_mod}]
Let $\pi \colon Y \to X$ be a log resolution of $(X, \Delta)$. Then, a dlt modification is a minimal model of $(Y,\pi^{-1}_*\Delta + \Exc(\pi))$ over $X$ (see Theorem \ref{thm:main}).
\end{proof}

\begin{proof}[Proof of Corollary \ref{cor:plt}]
By taking a log resolution of $(X,S+B)$ it is easy to see that if $(X,S+B)$ is plt, then $(\tilde S,B_{\tilde S})$ is klt. Thus, we can assume that $(\tilde S,B_{\tilde S})$ is klt and aim to show that $(X,S+B)$ is plt near $S$. 

Let $\pi \colon Y \to X$ be a dlt modification of $(X,S+B)$ (see Corollary \ref{cor:dlt_mod}) and write $K_Y + S_Y + B_Y = \pi^*(K_X+S+B)$. {By definition} (of a dlt modification) for any $\pi$-exceptional irreducible divisor $E$ we have that $E \subseteq \lfloor B_Y \rfloor$.  Write
\[
(\pi|_{S_Y})^*(K_{\tilde S} + B_{\tilde S}) = (K_Y+S_Y+B_Y)|_{\tilde S_Y} = K_{\tilde S_Y} + B_{\tilde S_Y}, 
\]
where $\tilde S_Y \to S_Y$ is the normalisation of $S_Y$, and $B_{\tilde S_Y}$ is the different. Let $E$ be a $\pi$-exceptional divisor intersecting $S_Y$. Since $E \subseteq \lfloor B_Y \rfloor$ and $(Y,S_Y+\Exc(\pi))$ is dlt, we must have that $E \cap S_Y \subseteq \lfloor B_{\tilde S_Y} \rfloor$. {If $E \cap S_Y \neq \emptyset$, then} this contradicts $(\tilde S, B_{\tilde S})$ being klt. 

Therefore we may assume that $E\cap S_Y=\emptyset$ so that $Y=X$ near $S$ and hence $(X,S+B)$ is dlt on a neighborhood of $S$. Since $S$ is irreducible, $(X,S+B)$ is in fact plt.
\end{proof}

\begin{remark} \label{rem:mmp_over_alg_spaces} Assume that $Y$ is projective. Then Theorem \ref{thm:main} holds in a slightly more general setting: suppose that $X$, instead of being a normal $\mbQ$-factorial variety, is an algebraic space admitting a birational morphism $\phi \colon X' \to X$ such that $X'$ is a normal $\mbQ$-factorial variety, $\rho(X'/X)=1$, and $\Exc(\phi)$ is a divisor. In view of Remark \ref{remark:contractions_over_algebraic_spaces}, the assertions of Theorem \ref{thm:main} hold true by exactly the same proof. Here, the assumption on the projectivity of $Y$ is needed in order to apply the cone theorem.
\end{remark}

\begin{remark} \label{rem:smaller_centres_normal} In fact all log canonical centres of a three-dimensional $\mbQ$-factorial dlt pair $(X,\Delta)$ are normal up to a universal homeomorphism. Indeed, if the centre is two-dimensional, then this follows from Theorem \ref{thm:main}. Thus, we can assume that we have a dlt pair $(X,D_1+D_2)$ and we need to show that every irreducible component of $D_1 \cap D_2$, say $C$, is normal up to a universal homeomorphism. Let $u \colon D^n_1 \to D_1$ be the normalisation of $D_1$ and let $C' \defeq u^{-1}(C)$. By adjunction (cf.\ Corollary \ref{cor:plt}), $(D_1^n, \mathrm{Diff})$ is plt, where $K_{D^n_1} + \mathrm{Diff} = (K_X+D_1+D_2)|_{D^n_1}$. By the surface theory, $C' \subseteq \lfloor \mathrm{Diff} \rfloor$ is smooth. In particular, the universal homeomorphism $u|_{C'} \colon C' \to C$ is also the normalisation. 
\end{remark}

\section{Minimal Model Program for dlt reductions}
In this section we show Theorem \ref{thm:mmp_reductions}. The main ingredient of the proof is the following result.
\begin{proposition} \label{prop:flips_for_reductions} Let $(X,S+B)$ be a three-dimensional $\mbQ$-factorial plt pair defined over a perfect field $k$ of characteristic $p>0$, where $S$ is an irreducible divisor. Let $g \colon X \to Z$ be a flipping contraction of a $K_X+S+B$ negative extremal curve $R$ such that $R \subseteq S$ and  $R \cdot S = 0$. Then the flip of $g$ exists.
\end{proposition}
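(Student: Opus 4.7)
Since $R \cdot S = 0$, the standard pl-flip hypothesis fails: there is no divisor in $\lfloor S+B \rfloor = S$ meeting $R$ negatively, so Proposition~\ref{prop:flips_exist} does not directly apply. My plan is to restore the pl-flip setup on a suitable birational modification of $X$, by extracting an exceptional divisor over the flipping curve $R$ and then invoking the existence of pl-flips already established.

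First, I would construct a projective birational morphism $\pi\colon Y\to X$ from a $\mbQ$-factorial dlt pair $(Y, S_Y+B_Y+E)$, extracting a single prime $\pi$-exceptional divisor $E$ whose centre on $X$ is $R$. Such an extraction can be obtained by applying Corollary~\ref{cor:dlt_mod} to a perturbed pair $(X, S+B+\delta D)$, where $D$ is an effective $\mbQ$-Cartier divisor containing $R$ and $\delta>0$ is tuned so that a chosen valuation with centre $R$ acquires log discrepancy zero; by choosing $D$ and $\delta$ carefully, one arranges for $E$ to be the only divisor extracted.

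On $Y$, the relative Picard rank $\rho(Y/Z)=2$, so $\overline{\mathrm{NE}}(Y/Z)$ has exactly two extremal rays: one spanned by a fibre $F$ of $E\to R$ (the $\pi$-exceptional direction), the other spanned by a curve $R_Y\subseteq E$. Setting $\Delta_Y\defeq(1-\epsilon)S_Y+B_Y+E$ for small $\epsilon>0$, I would verify by intersection calculations on the exceptional $\mbP^1$-bundle $E\to R$ that $R_Y$ is $(K_Y+\Delta_Y)$-negative with $R_Y\cdot E<0$ and $R_Y\cdot S_Y>0$. This places us squarely in the setting of Proposition~\ref{prop:flips_exist} (with $E$ and $S_Y$ playing the roles of the two required divisors), and the flip $Y\dashrightarrow Y^+$ over $Z$ then exists. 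Finally, on $Y^+$, the strict transform $E^+$ is contracted over $Z$ via a $(K_{Y^+}+\Delta_{Y^+})$-negative divisorial contraction, whose existence follows from the main relative MMP theorem of this paper; the resulting variety is identified with the flip $X^+$ of $g$.

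The main obstacle is verifying the intersection signs $R_Y\cdot E<0$ and $R_Y\cdot S_Y>0$. The extremal curve $R_Y\subseteq E$ corresponds to a specific section of the $\mbP^1$-bundle $E\to R$, whose position is governed by the conormal bundle $N^*_{R/X}$; its relationship to the section $\Gamma=S_Y\cap E$ (the strict-transform trace of $S$) requires careful analysis of the normal geometry along $R$ and a judicious choice of the perturbing divisor $D$. A further delicate point is verifying that the concluding contraction $Y^+\to X^+$ does indeed produce the flip of $g$, rather than some other small birational modification over $Z$.
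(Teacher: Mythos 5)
Your approach (extract a single divisor $E$ over the flipping curve, then play a two-ray game over $Z$) is genuinely different from the paper's, which follows Fujino's reduction to pl-flips: the paper chooses a reduced Cartier divisor $H'$ on $Z$ through $P=g(R)$ whose components generate $N^1(\,\cdot\,/Z)$ on any $\mbQ$-factorial model, passes to a log resolution of $(X,\Delta+H)$, and runs first a $(K_Y+\Delta_Y+H_Y)$-MMP and then a $(K_Y+\Delta_Y)$-MMP with scaling of $H_Y$ over $Z$; at every flipping step the relations $R\cdot h^*H'=0$ and $H_Y\equiv_h-\sum_j b_jD_j$ \emph{force} the existence of two boundary components with opposite signs on the ray, so Proposition~\ref{prop:flips_exist} applies, and special termination concludes. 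The whole point of that construction is to manufacture, at every step, the sign conditions that you leave unverified.

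Concretely, your argument has the following gaps. First, and most seriously, the conditions $R_Y\cdot E<0$ and $R_Y\cdot S_Y>0$ are not established, and they are the entire content of the problem. Note that since $S\cdot R=0$, the divisor $S'=g(S)$ is $\mbQ$-Cartier and $\pi^*g^*S'=S_Y+cE\equiv_Z 0$ with $c>0$; hence $S_Y\cdot R_Y=-c\,E\cdot R_Y$, so your two conditions are equivalent to each other, and more usefully the pair $(E,S_Y)$ always provides one negative and one positive divisor on $R_Y$ \emph{unless} $E\cdot R_Y=0$ (this is the exact analogue of the paper's mechanism). You give no argument ruling out, or handling, the degenerate case $E\cdot R_Y=S_Y\cdot R_Y=0$, in which no pl-flip structure is available on $Y$ and the approach stalls. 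Second, Proposition~\ref{prop:flips_exist} requires \emph{both} divisors to lie in $\lfloor\Delta_Y\rfloor$; with your choice $\Delta_Y=(1-\epsilon)S_Y+B_Y+E$ one has $S_Y\not\subseteq\lfloor\Delta_Y\rfloor$, so $S_Y$ cannot play the role of $E'$ — you must keep $S_Y$ with coefficient one (which the dlt modification gives you anyway). Third, you only treat the first step: after $Y\dashrightarrow Y^+$ there is no reason the next extremal contraction over $Z$ is the divisorial contraction of $E^+$; it may be another flip, and each such flip again needs the sign conditions, so the argument must be organized as a full MMP over $Z$ with special termination (at which point the identification of the output with $X^+$ does follow, since $K_Y+S_Y+B_Y+E-\pi^*(K_X+S+B)=(a(E;X,S+B)+1)E$ is effective and exceptional, forcing the minimal model over $Z$ to contract $E$ and to be small and $K+\Delta$-ample over $Z$). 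Finally, the extraction of exactly one divisor with centre exactly $R$ via a perturbation $(X,S+B+\delta D)$ is delicate because $R\subseteq S$ and $S$ is already an lc place; this needs an actual tie-breaking argument, not just an assertion.
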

The proof follows the same strategy as the reduction of the existence of flips to pl-flips (\cite{fujino05}). The necessary pl-flips exist by Proposition \ref{prop:flips_exist}. 
\begin{proof} It suffices to prove the claim in a neighborhood of the point $P :=g(R)\subset Z$. Since $S \cdot R = 0$, we have that $S' \defeq g(S)$ is $\mbQ$-Cartier. Set $\Delta = S+B$ and let $H'$ be a reduced Cartier divisor on $Z$  containing the  non simple normal crossing locus of $(Z, g_*\Delta)$ and satisfying the following {conditions} (see assumptions (i)-(iv) in \cite[Theorem 4.3.7]{fujino05}):
\begin{enumerate}
	\item $H \defeq g_*^{-1}H'$ contains $\Exc(g)$,
	\item for any proper birational morphism $h \colon Y \to Z$ such that $Y$ is $\mbQ$-factorial, we have that $N^1(Y/Z)$ is generated by the irreducible components of the strict transform of $H'$ and the $h$-exceptional divisors,
	\item $H$ and $\Delta$ have no irreducible components in common.
\end{enumerate}
Consider a log resolution of $(X,\Delta + H)$:
\begin{center}
\begin{tikzcd}
Y \arrow{r}{p} \arrow[bend left = 35]{rr}{h} & X \arrow{r}{g} & Z.
\end{tikzcd}
\end{center}
% We may assume that $H'$ contains the image of each $h$-exceptional divisor.
We claim that we can run a $(K_Y+\Delta_Y+H_Y)$-MMP over $Z$, where $H_Y$ is the strict transform of $H$, and $\Delta_Y \defeq p_*^{-1}\Delta + \Exc(h)$. 
First of all note that for any extremal ray $R$ for $Y$ over $Z$, we may assume that $R$ is contained in the support of $h^*H'$ and by condition (2) there is a component of the support of $h^*H'$ having a non-zero intersection number with $R$. Since $R\cdot h^*H'=0$, there are components  $E,E'$ of  the support of $h^*H'$ such that $E\cdot R<0$ and $E'\cdot R>0$. 
Divisorial centres are normal up to a universal homeomorphism by Theorem \ref{thm:normality}, and so contractions exist by Proposition \ref{prop:contractions}. The cone theorem holds by Theorem \ref{thm:cone}, and  special termination is valid by the standard argument (see \cite[Theorem 4.2.1]{fujino05}). 
The necessary flips exist by Proposition \ref{prop:flips_exist}.\\

%Thus, it is enough to show that the necessary flips exist. Let $g_Y \colon Y \to Z_Y$ be a flipping contraction of a $(K_Y+\Delta_Y+H_Y)$-extremal ray $R$. By condition (2) above, there exists an irreducible divisor $E$ in $\Supp h^*H'$ such that $R \cdot E < 0$. Since $R \cdot h^*H' = 0$, there must exist an irreducible divisor $E'$ in $\Supp h^*H'$ such that $R \cdot E' > 0$. Hence, the flip of $g_Y$ exists by Proposition \ref{prop:flips_exist}.\\

Let us replace $(Y,\Delta_Y+H_Y)$ by its minimal model over $Z$. Note that, after this replacement, $Y$ need not admit a map to $X$, but it still admits a map to $Z$, which as above we denote by $h \colon Y \to Z$. Write $\Delta_Y = D + B_Y$, where $B_Y=h^{-1}_*g_* B$ so that $\lfloor B_Y \rfloor = 0$ and $D = \sum_{i=1}^m D_i$ is a sum of irreducible divisors not contained in $H_Y$ with $D_1 = S_Y$ being the strict transform of $S$ and $D_2,\ldots , D_m$ are the $h$-exceptional divisors. Since $h^*H' \equiv_h 0$, we have that
\[
H_Y \equiv_h -\sum_j b_j D_j, \text{ where } b_j \in \mbQ_{\geq 0}.
\]
Run a $(K_Y+\Delta_Y)$-MMP with scaling of $H_Y$. Let $0 < \lambda \leq 1$ be such that $K_Y+\Delta_Y + \lambda H_Y$ is $h$-nef, and there exists a $(K_Y+\Delta_Y)$-extremal curve $R$ satisfying $(K_Y+\Delta_Y+\lambda H_Y) \cdot R = 0$. Since $(K_Y+\Delta_Y) \cdot R < 0$, we have that $H_Y \cdot R > 0$, and the above numerical equivalence implies that there exists $j$ for which $D_j \cdot R < 0$.  Arguing as above, in order to show that such an MMP can be run, it is enough to show that flips exist. Hence we suppose that the associated contraction is flipping, and so in particular $h(R)=P$. We have $h^*S' \cdot R = 0$ and $g(R)=P$, and thus there exists $j'$ for which $D_{j'} \cdot R \geq 0$ and $D_{j'} \cap R \neq \emptyset$. We claim that $D_{j'} \cdot R > 0$. Indeed, otherwise $R \subseteq D_{j'}$, and since $(Y, D_j + D_{j'})$ is dlt, we get that $R = D_j \cap D_{j'}$. Then
\[
R \cdot D_{j'} = R|_{D_j} \cdot D_{j'}|_{D_j} = \lambda (R|_{D_j})^2 < 0,
\]
for some $\lambda >0$, which is a contradiction. Given that $D_j \cdot R < 0$ and $D_{j'} \cdot R > 0$, the flip of $R$ exists by Proposition \ref{prop:flips_exist}.

Now, replace $(Y,\Delta_Y)$ by the output of this MMP. Then $K_Y+\Delta _Y$ is nef over $X$.  Notice that $h$ is small (by the negativity lemma since $(X,S+B)$ was plt) and $(Y,\Delta_Y)$ is the flip of $(X,\Delta)$. It is then easy to see that  $\rho (Y/X)=1$ (cf.\ \cite[Lemma 1.6]{AHK07}) and so  $K_Y+\Delta _Y$  is in fact ample over $X$. 
\end{proof}  

{\begin{proof}[Proof of Theorem \ref{thm:mmp_reductions}]
Note that $X$ is not of finite type over $k$ and hence is not a variety. In what follows, when we invoke results about varieties, we implicitly refer to the base change of $(\mathcal{X}, \Phi)$ with respect to an appropriate open neighbourhood of $s \in C$.

We run a $(K_X+\Delta)$-MMP with scaling of $A$, a sufficiently ample divisor over $S \defeq {\rm Spec}(R)$. We need to show that the cone theorem and the base point free theorem are valid for $(X,\Delta)$, that the flips exist, and that the MMP terminates. Let $A$ be an ample $\mbQ$-Cartier $\mbQ$-divisor on $X$.\\

\textbf{Cone theorem} Let $\smash{X_s = \bigcup_{j=1}^m E_j}$ be the reduced special fibre of $\pi$, where $E_j$ are irreducible components, and let $\tilde E_j \to E_j$ be the normalisations. Take $\{\Gamma_{j,k}\}_{k \geq 0}$ to be the  finite collection of images of  
\[
K_{\tilde E_j} + \Delta_{\tilde E_j} + A|_{\tilde E_j} = (K_X+\Delta+A)|_{\tilde E_j}
\]
{negative extremal curves} in $\tilde E_j$. Since $R$ is a DVR, $\overline{\mathrm{NE}}(X/S)$ is generated by curves contained in $X_s$, and so       
\[
\overline{\mathrm{NE}}(X/S) = \overline{\mathrm{NE}}(X/S)_{K_X+\Delta + A \geq 0} + \sum_{j,k} \mbR[\Gamma_{j,k}].
\]

%Moreover, by the cone theorem applied to $(\tilde E_j, \Delta_{\tilde E_j})$, the rays $\mbR[\Gamma_{j,k}]$ do not accumulate inside $\overline{\mathrm{NE}}(X/S)_{K_X+\Delta < 0}$, and $-6 < (K_X+\Delta) \cdot \Gamma_{j,k} < 0$. This concludes the proof of the cone theorem for $(X,\Delta)$ over $S$\footnote{Jakub: I am a bit worried I oversimplified something here}.\\

\textbf{Base point free theorem}
Suppose that $K_X+\Delta+A$ is nef and $(X,\Delta+A)$ is dlt. We aim to show that it is semiample. By \cite[Theorem 1.1]{CT17}), it is enough to check that $(K_X+\Delta+A)|_{X_s}$ and $(K_X+\Delta+A)|_{X_{\eta}}$ are semiample where $X_s$ and $X_{\eta}$ are the reduced special fibre, and the generic fibre, respectively. The latter $\mbQ$-divisor is semiample by the base point free theorem for surfaces (see \cite[Theorem 1.1]{tanakaimperfect}). Thus, it is enough to show that $(K_X+\Delta+A)|_{X_s}$ is semiample.

Let $\pi \colon W \to X_s$ be an $S_2$-fication (cf.\ \cite[Proposition 2.6]{waldronlc}) of $X_s$. By Remark \ref{rem:smaller_centres_normal}, all the log canonical centres of $(X,X_s)$ are normal up to a universal homeomorphism, and so  the proof of \cite[Proposition 5.5]{waldronlc} holds in our setting, showing that $\pi$ is a universal homeomorphism and $W$ is demi-normal. Therefore, $(W,\Delta_W)$ is slc, where $K_W + \Delta_W = \pi^*(K_X+\Delta)|_{X_s}$. 

By abundance for slc surfaces (see \cite[Theorem 0.1]{tanakaslc}), $K_W+\Delta_W$ is semiample, and, since $\pi$ is a universal homeomorphism, $(K_X+\Delta)|_{X_s}$ is semiample as well. \\

\textbf{Existence of flips} 
Let $g \colon X \to Z$ be a pl-flipping contraction of a $(K_X+\Delta)$-extremal curve $C$. In particular, $C$ is contained in the special fibre of $\phi \colon X \to S$, that is there exists an irreducible divisor  $E \subseteq \phi^{-1}(s)$ such that $C \subseteq E$. 

If $C \cdot E \neq 0$, then, given $C \cdot \phi^{-1}(s) = 0$, we get that there exists an irreducible divisor $E' \subseteq \phi^{-1}(s)$ satisfying $\mathrm{sgn}(C \cdot E') = -\mathrm{sgn}(C \cdot E)$. Thus the flip of $g$ exists by Proposition \ref{prop:flips_exist}. If $C \cdot E = 0$, then the flip exists by Proposition \ref{prop:flips_for_reductions}.\\

%By standard termination arguments, after finitely many steps $f \colon X\dasharrow X'$, we arrive to a minimal model or $K_{X'}+\Delta '+tA'$ is nef over $Z$ but not big for some rational number $0< t\leq 1$. So assume that $t>0$ and notice that then $\kappa (K_{X'_\eta }+\Delta '_\eta +tA'_\eta )<2$ where $\Delta '_\eta $ and $A'_\eta$ denote the restrictions of $\Delta '$ and $A'$ to $X_\eta$. Since $A'_\eta$ is big, {$\kappa(K_{X'_\eta}) < 0$} and $X'_\eta $ is uniruled.

\textbf{Termination} All the flipped curves are contained in $\Supp \phi^{-1}(s) \subseteq \lfloor \Delta \rfloor$, and so the MMP terminates by special termination (see \cite{fujino05}).
\end{proof}}

\section{Other applications}{}
In this section, we gather some other results on three-dimensional birational geometry  that can be extended to low characteristic.
\begin{proposition}[{cf.\ \cite[Theorem 5.6]{hx13}}] \label{prop:divisorial_contractions} Let $(X,\Delta)$ be a projective $\mbQ$-factorial three-dimensional dlt pair defined over a perfect field $k$ of positive characteristic $p>0$. {Let $R$ be a $(K_X+\Delta)$-negative extremal ray.} Assume that the closure of the locus of curves which are numerically equivalent to {a multiple} of $R$ is two-dimensional. Then the contraction of $R$ exists.
\end{proposition}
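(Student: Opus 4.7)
The plan is to reduce to Proposition \ref{prop:contractions}, which contracts extremal rays meeting a component of $\lfloor \Delta \rfloor$ negatively. Let $D$ denote the reduced divisor given by the closure of the two-dimensional locus of curves whose class is a positive multiple of $R$; by extremality of $R$, one may assume $D$ is irreducible. The central step is the numerical inequality $D \cdot R < 0$. For a general curve $C$ with $[C] \in R$, necessarily contained in $D$, the covering family of $R$-curves on $D$ forces the strict transform $\tilde C$ of $C$ on the normalisation $\tilde D$ to satisfy $\tilde C^2 \geq 0$. If one assumed $D \cdot R \geq 0$, then $\mcO_X(D)|_{\tilde D} \cdot \tilde C \geq 0$, and combining with the Hodge index theorem on $\tilde D$ and a supporting hyperplane of $R$ in $N^1(X)$ (using Theorem \ref{thm:cone}) produces a curve in class $R$ lying outside $D$, contradicting the identification of $D$ as the locus of $R$. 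This is the main obstacle of the proof and runs parallel to the classical argument of \cite[Theorem 5.6]{hx13}.

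Once $D \cdot R < 0$ is in hand, I reduce to Proposition \ref{prop:contractions} as follows. Let $a \in [0,1]$ be the coefficient of $D$ in $\Delta$ and set $\Delta' := \Delta + (1-a)D$, so that $D \subseteq \lfloor \Delta' \rfloor$ and
\[
(K_X+\Delta') \cdot R = (K_X+\Delta) \cdot R + (1-a)(D \cdot R) < 0.
\]
If $(X,\Delta')$ is dlt, then Theorem \ref{thm:normality} together with Remark \ref{rem:smaller_centres_normal} verifies the normality-up-to-universal-homeomorphism hypothesis of Proposition \ref{prop:contractions}, and that proposition immediately produces the contraction of $R$. In the remaining case I pass to a $\mbQ$-factorial dlt modification $\pi \colon Y \to X$ of $(X,\Delta')$ via Corollary \ref{cor:dlt_mod}; on $Y$ one has $(Y,\Delta_Y)$ $\mbQ$-factorial dlt with $D_Y := \pi^{-1}_*D \subseteq \lfloor \Delta_Y \rfloor$ and an extremal ray $R_Y$ lifting $R$ with $D_Y \cdot R_Y < 0$. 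Applying Proposition \ref{prop:contractions} on $Y$ to $R_Y$ yields a morphism $Y \to Z$, which descends via a $(K_Y+\Delta_Y)$-MMP over $Z$ provided by Theorem \ref{thm:main} to the desired contraction $X \to Z$.

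The main obstacle is the numerical inequality $D \cdot R < 0$; the remainder is essentially a bookkeeping reduction to Proposition \ref{prop:contractions} and the relative MMP machinery already established.
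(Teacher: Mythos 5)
The paper's own proof is a one-line deferral: given Remark \ref{rem:mmp_over_alg_spaces}, the argument of \cite[Theorem 5.6]{hx13} goes through verbatim. Your first half runs parallel to that argument: identify the divisorial component $D$ of the locus of $R$, establish $D \cdot R < 0$, promote $D$ into the boundary, and invoke Proposition \ref{prop:contractions}. In the case where $(X,\Delta + (1-a)D)$ is dlt this is correct and complete (and your appeal to Theorem \ref{thm:normality} for the normality-up-to-universal-homeomorphism hypothesis is the right way to remove the $p>5$ assumption). Your sketch of $D\cdot R<0$ gestures at the standard covering-family/Hodge-index argument; the precise contradiction you state (``a curve in class $R$ lying outside $D$'') is not how that lemma usually closes, but this is a known fact and I would not count it as a gap.

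The genuine gap is in your ``remaining case''. After passing to a dlt modification $\pi\colon Y\to X$ of $(X,\Delta')$, you assert the existence of ``an extremal ray $R_Y$ lifting $R$'' with $D_Y\cdot R_Y<0$ and apply Proposition \ref{prop:contractions} on $Y$. But the strict transform of a general $R$-curve has no reason to span an extremal ray of $\overline{\mathrm{NE}}(Y)$: the cone of $Y$ is strictly larger than that of $X$, and extremality is exactly what Proposition \ref{prop:contractions} needs. The subsequent descent step (``a $(K_Y+\Delta_Y)$-MMP over $Z$ \ldots\ to the desired contraction $X\to Z$'') is also unjustified, since nothing identifies $X$ as a model occurring in that MMP, nor produces a morphism $X\to Z$. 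This is precisely the point where the paper's argument differs: following \cite{hx13}, one first obtains the contraction of $R$ as a proper map to a normal \emph{algebraic space} $Z$ via Keel's theorem applied to a supporting nef divisor, and then runs the relative MMP of Theorem \ref{thm:main} \emph{over the algebraic space} $Z$ --- this is exactly the content of Remark \ref{rem:mmp_over_alg_spaces}, which your proposal never uses --- to conclude that the contraction exists as a projective morphism of varieties. Without that ingredient (or a replacement for it), the non-dlt case of your reduction does not close.
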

%The proof is similar to that of \cite[Theorem 5.6]{hx13}.
\begin{proof}
{Given Remark \ref{rem:mmp_over_alg_spaces}, the proof is exactly the same as that of \cite[Theorem 5.6]{hx13}.}
\end{proof}

We can also generalise {the main results of} \cite{xuzhang18}.

\begin{proposition}[{cf.\ \cite[Theorem 3.2]{xuzhang18}}] \label{prop:tame_fundamental_group} Let $(X,x)$ be a three-dimensional $\mbQ$-factorial klt singularity defined over a perfect field of characteristic $p>0$. Then the tame fundamental group $\pi^{\mathrm{tame}}_1(X,x)$ is finite.
\end{proposition}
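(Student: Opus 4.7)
The plan is to follow the strategy of Xu--Zhang \cite{xuzhang18}, whose characteristic $p>5$ argument used only the MMP for threefolds together with finiteness results for klt log del Pezzo surfaces; we can now replicate it in all characteristics $p>0$ using Theorem \ref{thm:main}, Corollary \ref{cor:dlt_mod}, and Theorem \ref{thm:normality}.

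The first step is to construct a \emph{Koll\'ar component} at $x\in X$: a projective birational morphism $\pi\colon Y\to X$ with $Y$ being $\mbQ$-factorial, $\Exc(\pi)$ a single prime divisor $E$ over $x$, $(Y,E)$ plt, and $-(K_Y+E)$ being $\pi$-ample. Starting from a log resolution of $(X,x)$, one runs a suitable $(K_W+\Delta_W)$-MMP over $X$ via Theorem \ref{thm:main}, then reduces to a single exceptional divisor via an additional run of the MMP with an appropriately perturbed boundary; $\mbQ$-factoriality of the output is guaranteed by Corollary \ref{cor:dlt_mod}.

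Given such a Koll\'ar component, we analyse finite connected tame \'etale covers $V\to X\setminus\{x\}$. Let $X^\sharp\to X$ be the normalisation of $X$ in the function field of $V$; tameness together with klt-ness of $X$ implies $X^\sharp$ is klt by the standard ramification formula. After passing to a small $\mbQ$-factorialisation of $X^\sharp$ (which exists by Corollary \ref{cor:dlt_mod}), the base change of $\pi$ produces an analogous Koll\'ar-type extraction $Y^\sharp\to X^\sharp$ whose exceptional divisor $E^\sharp$ is finite and tamely ramified over $E$. Adjunction on the plt pair $(Y,E)$ identifies $(\tilde E,\mathrm{Diff})$, where $\tilde E\to E$ is the normalisation, as a klt surface pair with $-(K_{\tilde E}+\mathrm{Diff})$ ample; by Theorem \ref{thm:normality} the map $\tilde E\to E$ is a universal homeomorphism, so it induces an equivalence of tame \'etale sites, and the cover $E^\sharp \to E$ pulls back to a tame cover of $(\tilde E, \mathrm{Diff})$.

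Finally, the standard Leray-type exact sequence relating $\pi_1^{\mathrm{tame}}(X,x)$ to the tame fundamental group of the link of $E$ (compare the argument in \cite[Theorem 3.2]{xuzhang18}) reduces the question to the finiteness of the regional tame fundamental group of the klt log del Pezzo pair $(\tilde E,\mathrm{Diff})$, which is known for surfaces in arbitrary positive characteristic by surface MMP and classification arguments. The principal obstacle is the first step, namely the extraction of a Koll\'ar component in low characteristic, which is precisely what Theorem \ref{thm:main} delivers; the secondary difficulty that divisorial centres of plt threefolds can fail to be normal in characteristic $2,3,5$ is circumvented by Theorem \ref{thm:normality} together with the invariance of tame $\pi_1$ under universal homeomorphisms.
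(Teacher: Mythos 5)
Your proposal is correct and follows essentially the same route as the paper, whose proof simply observes that the argument of Xu--Zhang goes through verbatim once the existence of a Koll\'ar component is secured via Theorem \ref{thm:main}. Your additional remark that the possible non-normality of the Koll\'ar component in characteristic $2,3,5$ is handled by Theorem \ref{thm:normality} together with the invariance of the tame fundamental group under universal homeomorphisms is a useful elaboration of what the paper leaves implicit in ``the standard argument.''
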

\begin{proof}
It follows by exactly the same proof as that of \cite[Theorem 3.2]{xuzhang18}. The reason for that proof to require the assumption $p>5$, was the use of the existence of a Koll\'ar component, which now follows from Theorem \ref{thm:main} and the standard argument.   
\end{proof}

\begin{comment}
{\color{red} Consequently, we show that the non-vanishing conjecture holds for non-irregular terminal threefolds in characteristic $p=5$.
\begin{proposition}[Non-vanishing, {cf.\ \cite[Theorem 1.1]{xuzhang18}}] \label{prop:non_vanishing} Let $X$ be a terminal $\mbQ$-factorial projective minimal threefold defined over an algebraically closed field $k$ of characteristic $p>3$. {Suppose that $\dim \Pic^0(X')= 0$ for every quasi-\'etale Galois cover $X' \to X$ which is tamely ramified over $X^{\mathrm{sm}}$}\footnote{Jakub: shall we keep this proposition with this weird assumption, or just move this to a remark?}. Then $\kappa(K_X)\geq 0$.
\end{proposition}
%Note that the case of $\dim \Pic^0(X) \neq 0$ and $p>5$ was solved in \cite{zhang17} (cf.\ the appendix to \cite{daswaldron}) using F-splittings and techniques of the positive characteristic generic vanishing theorem (\cite{HP16}).
\begin{proof}
Again, the proof is exactly the same as that in \cite{xuzhang18}, wherein the assumption on the characteristic was used twice: to get the finiteness of the local tame fundamental group (Propostion \ref{prop:tame_fundamental_group}) and the semistability of a tensor product of two semistable sheaves of rank two. The latter requires $p>3$. \footnote{Jakub: shall we mention the comment by Lei that in fact $p>3$ can be dropped, too?}
\end{proof}

\begin{remark}
As informed by Chenyang Xu and Lei Zhang, one should be able to prove the non-vanishing in characteristic $p=5$ when $\dim \Pic^0(X) \neq 0$ as well (generalising \cite{zhang17}), with help of our results on the Minimal Model Program and \cite{witaszek17}.
\end{remark}}\footnote{I suggest removing everything in red .... starting from consequently....}
\end{comment}
{
\begin{proposition}[{cf.\ \cite[Theorem 3.6]{hacondas}}] \label{prop:lccentres} Let $(X,\Delta)$ be a $\mbQ$-factorial three-dimensional log canonical pair defined over a perfect field of characteristic $p>0$ and such that $X$ has klt singularities. Then minimal log canonical centres of $(X,\Delta)$ are normal up to a universal homeomorphism.
\end{proposition}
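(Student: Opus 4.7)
The plan is to reduce the claim to the dlt case already handled in Remark~\ref{rem:smaller_centres_normal}, by passing to a dlt modification of $(X,\Delta)$ and then descending via a Koll\'ar--Shokurov type connectedness argument powered by the $W\mcO$-rationality of klt threefolds.

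Let $W$ be a minimal log canonical centre of $(X,\Delta)$. If $W$ is a divisor, then after shrinking $X$ to a Zariski neighbourhood of $W$ we may assume $W$ is the only log canonical centre of $(X,\Delta)$ meeting that neighbourhood; this forces $(X,\Delta)$ to be plt with $\lfloor \Delta \rfloor = W$, and Theorem~\ref{thm:normality} applies. If $W$ is a closed point, the claim is trivial. We may therefore assume that $W$ is a curve. Applying Corollary~\ref{cor:dlt_mod}, take a dlt modification $\pi \colon Y \to X$ and set $\Gamma := \pi^{-1}_*\Delta + \Exc(\pi)$, so that $(Y,\Gamma)$ is $\mbQ$-factorial dlt and $K_Y + \Gamma = \pi^*(K_X+\Delta)$ (the absence of a positive discrepancy correction uses the lc hypothesis on $(X,\Delta)$). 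Pick $W_Y$ to be a minimal lc centre of $(Y,\Gamma)$ mapping onto $W$; by Remark~\ref{rem:smaller_centres_normal}, the normalisation $\widetilde W_Y \to W_Y$ is a universal homeomorphism.

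Granted that $\pi|_{W_Y}\colon W_Y \to W$ is itself a universal homeomorphism, the composite $\widetilde W_Y \to W$ is a universal homeomorphism and, by the universal property of normalisation applied to the normal integral scheme $\widetilde W_Y$, factors as $\widetilde W_Y \to \widetilde W \to W$. Since $\widetilde W_Y \to \widetilde W$ is then a finite surjection between integral normal curves, it is purely inseparable and hence also a universal homeomorphism, so the two-out-of-three property forces $\widetilde W \to W$ to be a universal homeomorphism. Thus it suffices to verify that $\pi|_{W_Y}$ is a finite surjection that becomes set-theoretically injective after base change to the algebraic closure of $k$.

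The key ingredient is the following Koll\'ar--Shokurov type statement: for every closed point $w \in W$, the intersection of $\pi^{-1}(w)$ with the union of lc centres of $(Y,\Gamma)$ dominating $W$ is connected. Granting this, the minimality of $W_Y$ forces $\pi|_{W_Y}^{-1}(w)$ to be connected and zero-dimensional, hence a single point, yielding the universal homeomorphism property. The main obstacle is establishing this connectedness. In characteristic zero it is an immediate consequence of Kawamata--Viehweg vanishing applied to the $\pi$-antinef divisor $-\lfloor \Gamma \rfloor \sim_{\pi,\mbQ} K_Y + (\Gamma - \lfloor \Gamma \rfloor) - \pi^*(K_X+\Delta)$; in our positive-characteristic setting it must instead be extracted from the vanishing $R^1\pi_*(W\mcO_{Y,\mbQ})=0$ supplied by Corollary~\ref{cor:wittrationality} together with the $W\mcO$-cohomological machinery reflected in Proposition~\ref{prop:ascend_witt_rationality}. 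Once this connectedness is in place, the remainder of the argument is formal.
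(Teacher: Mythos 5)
Your reduction to a dlt modification is the right first move, and the divisor and point cases are handled correctly, but the core of your argument --- the Koll\'ar--Shokurov connectedness of $\pi^{-1}(w)$ intersected with the union of lc centres dominating $W$ --- is exactly the step that is not available in positive characteristic, and you do not supply it. The vanishing $R^1\pi_*(W\mcO_{Y,\mbQ})=0$ concerns the Witt vectors of the structure sheaf of $Y$ and does not yield the coherent vanishing $R^1\pi_*\mcO_Y(-\lfloor\Gamma\rfloor)=0$ (or any statement about the ideal of the non-klt locus) on which the characteristic-zero connectedness argument rests; asserting that connectedness ``must be extracted from'' the $W\mcO$-machinery is not a proof, and no such extraction is known. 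Moreover, even granting connectedness of the union of the lc centres dominating $W$ fibrewise, your chosen minimal centre $W_Y$ is only one irreducible component of that union, so its own fibres need not be connected; the passage from connectedness of the union to singleton fibres of $W_Y$ is a second unjustified step.

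The paper sidesteps connectedness entirely. By the tie-breaking trick one perturbs $\Delta$ so that $W$ is the unique lc centre and there is a unique divisor $S$ over $X$ of discrepancy $-1$; the dlt modification $f\colon Y\to X$ then extracts exactly this one irreducible divisor, so no union of centres ever enters. The decisive input is the vanishing $R^1f_*\mcO_Y(-S)=0$ from \cite[Theorem 3.5]{hacondas}, valid in every characteristic because it only requires $S$ to be normal in codimension one. This gives the exactness of $0\to\mcI_W\to\mcO_X\to f_*\mcO_S\to 0$ and hence $\mcO_W\simeq f_*\mcO_S$, and pushing forward the factorisation $\mcO_S\to\mcO_{S^n}\to F^e_*\mcO_S$ supplied by Theorem \ref{thm:normality} yields $\mcO_W\to\mcO_{W^n}\to F^e_*\mcO_W$, i.e.\ the normalisation of $W$ factors through a power of Frobenius. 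To salvage your route you would first have to prove the connectedness statement, which in this setting is essentially as hard as the proposition itself.
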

\begin{proof}
Let $W$ be a minimal log canonical centre of $(X,\Delta)$. By the tie-breaking trick, up to perturbing $\Delta$, we can assume that $W$ is a unique log canonical centre with a unique divisor over $X$ of discrepancy $-1$ (cf.\ the proof of \cite[Theorem 3.6]{hacondas}). If $\codim_X(W) = 1$, then $(X,\Delta)$ is plt, and the proposition follows by Theorem \ref{thm:normality}. Thus, we can assume that $\codim_X(W)=2$.

Let $f \colon Y \to X$ be a dlt modification of $(X,\Delta)$. By construction, $f$ extracts exactly one divisor $S$ and we have that 
\[
K_Y + S + \Delta' = f^*(K_X+\Delta),
\]
where $\Delta' \defeq f^{-1}_*\Delta$. Since $(Y,S+\Delta')$ is plt, $S$ is normal up to a universal homeomorphism, and so there exists an integer $e>0$ and a factorisation 
\[
\mcO_S \to \mcO_{S^n} \to F^e_*\mcO_S,
\]
 where $S^n$ is the normalisation of $S$. The proof of \cite[Theorem 3.5]{hacondas} does not require any assumptions on the characteristic (it only needs $S$ to be normal in codimension one which follows by means of localisation and surface theory), hence $R^1f_*\mcO_Y(-S)=0$ and the sequence
\[
0 \to f_*\mcO_Y(-S) \to \mcO_X \to f_*\mcO_S \to 0
\]
is exact. Note that $f_*\mcO_Y(-S) = \mcI_W$, where $\mcI_W$ is the ideal sheaf of $W$. 

The surjective morphism on the right factors as $\mcO_X \to \mcO_W \to f_*\mcO_S$, which implies that $\mcO_W \simeq f_*\mcO_S$. Thus, we have a sequence of injective morphisms
\[
\mcO_W \to \mcO_{W^n} \to f_*\mcO_{S^n} \to f_*F^e_*\mcO_S \simeq F^e_* \mcO_W.
\]
In particular, the normalisation of $W$ factors through the Frobenius $F^e$, and so it is a universal homeomorphism.
\end{proof}}
{One can also show that \cite[Lemma 2.1]{hacondas} holds for $p>0$. The only issue is that the techniques of our paper do  not allow for extracting arbitrary divisors with discrepancies smaller than zero. However, by using the tie-breaking trick as above, the requisite divisors in the proof of \cite[Lemma 2.1]{hacondas} can be extracted with the help of dlt modifications (see Corollary \ref{cor:dlt_mod}).}

\section*{Acknowledgements}{}
We would like to thank {Fabio Bernasconi}, Paolo Cascini, Hiromu Tanaka, Joe Waldron, {Chenyang Xu, and Lei Zhang} for comments and helpful suggestions.

The first author was partially supported by NSF research grants no: DMS-1300750, DMS-1840190, DMS-1801851 and by a grant
from the Simons Foundation; Award Number: 256202. He would also like
to thank the Mathematics Department and the Research Institute for Mathematical Sciences,
located Kyoto University and the Mathematical Sciences Research Institute in Berkeley were some of this research was conducted. The second author was supported by the Engineering and Physical Sciences Research Council [EP/L015234/1] during his PhD at Imperial College London, by the National Science Foundation under Grant No.\ DMS-1638352 at the Institute for Advanced Study in Princeton, and by the National Science Foundation under Grant No.\ DMS-1440140 while the author was in residence at the Mathematical Sciences Research Institute in Berkeley, California, during the Spring 2019 semester.

\bibliographystyle{amsalpha}
\bibliography{final}

\end{document}